\documentclass[a4paper,12pt]{article}
\usepackage{array,amsfonts,amsmath,graphicx,mathrsfs,amssymb,amsthm,latexsym}
\usepackage[latin1]{inputenc}
\usepackage{color}

\usepackage{csquotes}

\textheight 22cm
\textwidth 14cm
 \voffset -1cm

\newtheorem{thm}{Theorem}[section]

\newtheorem{lemma}[thm]{Lemma}

\def \cH {{\cal H}}

\begin{document}
\title{Uniformly resolvable decompositions of $K_v$ into one $1$-factor and $n$-stars when $n>1$ is odd}

\author
 {Jehyun Lee and  Melissa Keranen\\
\small Department of Mathematical Sciences \\
\small Michigan Technological University\\
}

\maketitle

\vspace{5 mm}

\begin{abstract}
We consider uniformly resolvable decompositions of $K_v$ into subgraphs such that each resolution class contains only blocks isomorphic to the same graph. We give a complete solution for the case in which one resolution class is $K_2$ and the rest are $K_{1,n}$ where $n>1$ is odd.

\end{abstract}

\section{Introduction}\label{intro}

Let $G$ be a graph with vertex set $V(G)$ and edge set ${E }(G)$. An $\cH$-$decomposition$ of the graph $G$ is a collection of edge disjoint subgraphs $\cH = \{H_1,H_2, \dots, H_a \}$ such that every edge of $G$ appears in exactly one graph $H_i \in \cH$. 

The subgraphs, $\cH_i \in \cH$, are called blocks. An $\cH$-decomposition is called $resolvable$ if the blocks in $\cH$ can be partitioned into classes (or factors) $F_i$, such that every vertex of $G$ appears in exactly one block of each $F_i$. A resolvable $\cH$-decomposition is also referred to as an $\cH$-$factorization$ of $G$, whose classes are referred to as $\cH$-$factors$. Also, an $\cH$-decomposition is called $uniformly$ $resolvable$ if each class (or factor) $F_i$ consists of blocks that are all isomorphic to each other.

Recently, the existence problem for $\cH$-factorizations of $K_v$ has been studied, and results have been obtained. In the case of uniformly resolvable ${\cal H}$-decompositions, results have been given when $\cH$ is a set of two complete graphs of order  at most five in \cite{DLD, R, SG, WG};
when
$\cH$ is a set of two or three paths on two, three or four vertices in \cite{GM1,GM2, LMT}; for
$\cH =\{P_3, K_3+e\}$ in \cite{GM}; for $\cH =\{K_3, K_{1,3}\}$ in \cite{KMT}; for $\cH =\{C_4, P_{3}\}$ in \cite{M}; for $\cH =\{K_3, P_{3}\}$ in \cite{MT}; and for $\cH =\{K_2, K_{1,3}\}$ in \cite{CC}.

If $\cH=\{H_1,H_2 \}$, then there are many types of uniformly resolvable $\cH$-decompositions, depending on how many factors contain copies of $H_1$ and how many factors contain copies of $H_2$. We let $(H_1,H_2)$-$URD(v;r,s)$ denote a uniformly resolvable decomposition of $K_v$ into $r$ classes containing only copies of $H_1$ and $s$ classes containing only copies of $H_2$.

A $K_2$-factorization of $G$ is known as a 1-{\em factorization}, and its factors are called 1-{\em factors} with a $1$-factor denoted by $I$. It is well known that a 1-factorization of $K_v$  exists if and only if $v$ is even (\cite{Lu}).

We will consider the case of $H_1=K_2$ and $H_2=K_{1,n}$. While the general case $(K_2,K_{1,n})$-$URD(v;r,s)$ is still open and in progression, we have observed that the standard methods used for most cases of $(r,s)$ are not applicable to solve the cases when number of $1$-factors is small. Thus, we studied these cases separately. The case of $(K_2,K_{1,5})$-$URD(v;1,s)$ is presented in \cite{JAY}.

In general, if $r=1$, then $s=\frac{(v-2)(n+1)}{2n}$. In this paper, we focus on the $(K_2,K_{1,n})$-$URD(v;1,s)$. We completely solve the existence problem by proving the following result.\\

\noindent \textbf{Main Theorem.} 
{\em Let $n>1$ be an odd integer. A $(K_2,K_{1,n})$-$URD(v;1,s)$ exists if and only if $v \equiv 2(n+1) \pmod{n(n+1)}$}.

\section{Necessary Conditions}

\begin{lemma} 
\label{ness}
If a $(K_2,K_{1,n})$-$URD(v;1,s)$ exists with $n>1$ odd, then $v \equiv 2(n+1) \pmod{n(n+1)}$ and $s=\frac{((n+1)k+2)(n+1)}{2}$. 
\end{lemma}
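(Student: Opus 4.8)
The plan is to extract the two conditions from simple counting arguments applied to a hypothetical $(K_2,K_{1,n})$-$URD(v;1,s)$. First I would record the basic parameters: a $K_{1,n}$ has $n$ edges and $n+1$ vertices, so a single $K_{1,n}$-factor of $K_v$ is a spanning subgraph in which every vertex lies in exactly one star; counting vertices, a $K_{1,n}$-factor uses $v/(n+1)$ stars, hence $(n+1)\mid v$ is forced, and each such factor covers $\frac{v}{n+1}\cdot n = \frac{nv}{n+1}$ edges. The unique $K_2$-factor ($1$-factor $I$) covers $v/2$ edges, which also requires $v$ to be even. Since $K_v$ has $\binom{v}{2}=\frac{v(v-1)}{2}$ edges and these are partitioned among the $1$-factor and the $s$ star-factors, I would write the edge-count identity
\[
\frac{v(v-1)}{2} \;=\; \frac{v}{2} \;+\; s\cdot\frac{nv}{n+1},
\]
and solve for $s$, obtaining $s = \frac{(v-2)(n+1)}{2n}$ (which recovers the formula $r=1\Rightarrow s=\frac{(v-2)(n+1)}{2n}$ quoted in the introduction).

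Next I would turn the integrality of $s$ together with the divisibility $(n+1)\mid v$ into the stated congruence. Writing $v = (n+1)m$ for some integer $m$, the expression for $s$ becomes $s = \frac{((n+1)m-2)(n+1)}{2n}$, so I need $2n \mid ((n+1)m-2)(n+1)$. Since $\gcd(n,n+1)=1$, the factor $n$ must divide $(n+1)m-2 \equiv m-2 \pmod n$, giving $m \equiv 2 \pmod n$, i.e. $m = nk+2$ for some nonnegative integer $k$; hence $v = (n+1)(nk+2) = n(n+1)k + 2(n+1)$, which is exactly $v \equiv 2(n+1)\pmod{n(n+1)}$. I would then check the remaining factor of $2$ in the denominator: with $n$ odd, $n+1$ is even, so $(n+1)/2$ is an integer and $s = \frac{((n+1)m-2)(n+1)}{2n} = \frac{(n+1)k+2}{1}\cdot\frac{n+1}{2}\cdot\frac{?}{?}$ — more carefully, substituting $m=nk+2$ gives $(n+1)m-2 = (n+1)(nk+2)-2 = n((n+1)k+2)$, so $s = \frac{n((n+1)k+2)(n+1)}{2n} = \frac{((n+1)k+2)(n+1)}{2}$, which is an integer precisely because $n+1$ is even, and this is the claimed formula for $s$.

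The main thing to be careful about — rather than a deep obstacle — is the bookkeeping around the factor $2$: one must use the hypothesis that $n$ is odd at exactly the right point, namely to guarantee that $\frac{(n+1)}{2}$ is an integer so that the value $s=\frac{((n+1)k+2)(n+1)}{2}$ coming out of the edge count is admissible, and conversely to confirm no stronger parity restriction on $k$ or $v$ is implied. I would also note in passing that $v$ even is automatic from $v=(n+1)(nk+2)$ with $n+1$ even, so the $1$-factorization obstruction $v$ even imposes nothing new beyond the congruence. Assembling these observations yields both claims of the lemma.
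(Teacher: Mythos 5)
Your proposal is correct and follows essentially the same route as the paper: count the edges of $K_v-I$ against the $\frac{nv}{n+1}$ edges per star factor to get $s=\frac{(v-2)(n+1)}{2n}$, then use integrality and $\gcd(n,n+1)=1$ to force $n\mid(v-2)$ alongside $(n+1)\mid v$. The only cosmetic differences are that you substitute $v=(n+1)m$ and solve the congruence directly where the paper invokes the Chinese remainder theorem, and that you carry the substitution through to verify the stated formula $s=\frac{((n+1)k+2)(n+1)}{2}$ explicitly, a step the paper leaves implicit.
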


\begin{proof}
Let $K_v$ be the complete graph on $v$ vertices, and $I$ be a $1$-factor of $K_v$. Because a $(K_2,K_{1,n})$-$URD(K_v;1,s)$ contains exactly one 1-factor, $I$, $v$ must be divisible by $2$. Also, if $s \geq 1$, $v$ must be divisible by $n+1$, and the total number of edges in $K_v-I$ must be divisible by the total number of edges in one $n$-star factor. Since $|E(K_v-I)|=\frac{v(v-1)}{2}-\frac{v}{2}$ and the total number of edges in one $n$-star factor is $\frac{nv}{(n+1)}$, we divide $\frac{v(v-2)}{2}$ by $\frac{nv}{(n+1)}$ to obtain $s=\frac{(v-2)(n+1)}{2n}$. Since $n$ cannot divide $n+1$, $(v-2)$ must be divisible by $n$. Therefore, we obtain the two congruences,

\begin{align}
    v \equiv& \textrm{ } 0 \pmod{n+1}\\
    v \equiv& \textrm{ } 2 \pmod{n}
\end{align}

By the chinese remainder theorem, we have $v \equiv 2(n+1) \pmod{n(n+1)}$.
\end{proof}

\section{Almost $n$-star Factors}
\label{ASF}
Let $S$ be a set of $v$ vertices, such that $v \equiv t \pmod{n+1}$. We will say a graph $G$ is {\em almost spanning} if it spans all but $t$ vertices in the set $S$. Define an {\em almost $n$-star factor} on a set of vertices $S$ to be an almost spanning graph on $S$ in which:
\begin{itemize}
    \item Each component of $S \backslash T$ is an $n$-star for some set, $T$ such that $|T|=t$, 
    \item the vertices in $T$ form a $(t-1)$-star.
\end{itemize}
We refer to the $(t-1)$-star as a {\em little star}. Note that if $t=1$, the little star is an isolated vertex, and if $t=0$, the almost $n$-star factor is equivalent to an $n$-star factor.

Let $G$ be a graph with $g$ vertices. The {\em difference} (or {\em length}) of the edge $e=\{u,v\}$ in $G$ with $u<v$, is $D(e)=min\{v-u,g-(v-u)\}$. If the difference of an edge $e$ is defined by $(v-u)$, then we will refer to this edge as a {\em forward edge}, and its difference will be called a {\em forward difference}. If the difference of an edge $e$ is defined by $g-(v-u)$, then we will refer to this edge as a {\em backward edge} (or {\em wrap-around edge}), and its difference will be called a {\em backward difference} (or {\em wrap-around difference}).

Let $F$ be an almost $n$-star factor.  Label the edges in $F$ by the differences they cover. Suppose each difference occurs no more than twice among the stars. If any difference $d$ appears exactly once, then use the label $d$($pure$). If any difference $d$ appears twice, then use the labels $d$($pure$) and $d'$($prime$) to distinguish them. Also, if $\{u,v\}$ is a forward edge with a prime difference, and $u<v$, then we will denote it by $\{u,v'\}$. If $\{u,v\}$ is a backward edge with a prime difference, and $u<v$, then we will denote it by $\{u,\overline{v'}\}$. We will refer to the corresponding differences as {\em pure differences} or {\em prime differences}, and similarly; we will refer to the corresponding edges as {\em pure edges} or {\em prime edges}. If a star consists of edges whose differences all have a pure label, we will refer to this star as a {\em pure star}. If it consists of edges whose differences all have a prime label, then it will be referred to as a {\em prime star}. If a star contains a mixture of pure edges and prime edges, then it will be referred to as a {\em mixed star}.

We aim to find a decomposition of $K_v-I$ into $n$-star factors, which is equivalent to showing the existence of a $(K_2,K_{1,n})-URD(v;1,s)$. Recall from our necessary condition that if a $(K_2,K_{1,n})-URD(v;1,s)$ exists, then $v \equiv 2(n+1) \pmod{n(n+1)}$. So let $v=n(n+1)k'+2(n+1)$ for some non negative integer $k'$. We will consider two cases, depending on the parity of $k'$. If $k'$ is odd, we let $k'=2k+1$ and write $v=2n(n+1)k+(n+1)(n+2)$. If $k'$ is even, we let $k'=2k$ and write $v=2n(n+1)k+2(n+1)$. Our approach for finding the desired decomposition will be to show that for any $k'$, we can construct almost $n$-star factors on a set of $g= \frac{v}{n+1}$ points in Section~\ref{ASF}. Then, in Section $4$, we will use these almost $n$-star factors to build $n$-star factors on a set of $v=g(n+1)$ points. In Section $5$, we show that the remaining edges of $K_v-I$ can be decomposed into $n$-star factors.

For any non negative integer, $t$, let $G$ be the complete graph $K_g$ with $V(G)$, $\{0,1,\dots, g-1\}$. We will find an almost $n$-star factor in $G$ with the little star having order $t$, where $0 \leq t \leq g-1$ and $t \equiv g \pmod{n+1}$. For the constructions that follow in this section, we define $\mu$ to be the maximum possible difference of any edge in $G$. Because $\mu$ is an integer, it should be noted that $\mu$ can also be used as a vertex label. Let $w \in \{1,2,\dots, n+1\}$ be the integer such that $\mu +1 \equiv w \pmod{n+1}$. If $\mu+1 = 0 \pmod{n+1}$, then we will choose $w$ to be $n+1$. Finally, because $n$ is odd, we write $n=2q+1$ for some positive integer $q$.

\subsection{$k'$ odd}
  In this section, we construct almost $n$-star factors when the number of isolated vertices is odd. In this case, $v= 2n(n+1)k + (n+1)(n+2)$, $g=2nk+(n+2)$, and $\mu = \frac{g-1}{2}$.

\begin{lemma}
\label{todd1}
Let $k=0$. There exists an almost $n$-star factor on $G$ with the following properties:
\begin{itemize}
\item Each forward difference $d \in \{1, 2, \dots, \frac{n+1}{2}\}$ appears at least once among the stars.
\item Each difference $d \in \{1, 2, \dots, \frac{n+1}{2}\}$ appears no more than twice among the stars.
\item There is one mixed star with $q+1$ pure edges and $q$ backward prime edges.
\item There is an isolated vertex.
\end{itemize}

\end{lemma}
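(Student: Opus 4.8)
The plan is to exploit the fact that when $k=0$ the parameters collapse: here $g=n+2$, $\mu=\frac{g-1}{2}=\frac{n+1}{2}=q+1$, and since $g\equiv 1\pmod{n+1}$ the little star has order $t=1$, i.e.\ it is a single isolated vertex. The $n+1$ non-isolated vertices must then be covered by $n$-stars, and an $n$-star has exactly $n+1$ vertices, so the desired almost $n$-star factor consists of precisely \emph{one} $n$-star together with \emph{one} isolated vertex. Thus the whole lemma reduces to exhibiting a single copy of $K_{1,n}$ inside $K_{n+2}$ whose $n$ edge-differences realise the required pattern.

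First I would take the star $F$ centered at vertex $0$ with leaf set $\{1,2,\dots,n\}$ and declare vertex $n+1$ to be the isolated vertex (this is the order-$1$ little star). Then I would record the difference of each edge $\{0,x\}$ for $1\le x\le n=2q+1$, computed in $G=K_{g}$ with $g=n+2$: for $1\le x\le q+1$ the edge is a forward edge of difference $x$, while for $q+2\le x\le 2q+1$ it is a wrap-around (backward) edge of difference $g-x=n+2-x$, and these backward differences run through $q+1,q,\dots,2$ as $x$ runs through $q+2,q+3,\dots,2q+1$.

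From this tally the four properties are immediate. Each forward difference $d\in\{1,\dots,q+1\}=\{1,\dots,\frac{n+1}{2}\}$ occurs (exactly once, as the forward edge $\{0,d\}$), giving the first property. Difference $1$ occurs once and each of $2,\dots,q+1$ occurs exactly twice, once as a forward edge and once as a backward edge, so no difference is used more than twice; labelling the forward copies pure and the backward copies prime gives the second property. The star $F$ then carries $q+1$ pure edges $\{0,1\},\dots,\{0,q+1\}$ and $q$ backward prime edges $\{0,\overline{(q+2)'}\},\dots,\{0,\overline{(2q+1)'}\}$, so it is a mixed star of exactly the stated shape, giving the third property; and vertex $n+1$ is the promised isolated vertex, giving the last property.

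I do not expect a genuine obstacle in this base case beyond the bookkeeping of \emph{which} difference fails to be doubled: exactly one of the $q+1$ admissible differences (here $d=1$, which is the difference we lose by omitting the leaf $x=2q+2=n+1$) ends up with a single pure occurrence, and one must check this is consistent with the edge count $n=(q+1)+q$ and with the ``at most twice'' bound. The only thing to watch is the smallest case $n=3$ (so $q=1$), where the count degenerates to $2$ pure and $1$ prime edge; the explicit description above already handles it.
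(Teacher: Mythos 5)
Your proposal is correct and is essentially the paper's own proof: the paper likewise takes the single mixed star $M=(0;1,2,\dots,q+1,\overline{(q+2)}',\dots,\overline{(2q+1)}')$ with $2q+2=n+1$ as the isolated vertex, and verifies the same difference tally (pure differences $1,\dots,q+1$ each once, backward prime differences $2,\dots,q+1$). Your additional bookkeeping about difference $1$ being the only one not doubled is a harmless refinement of the same argument.
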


\begin{proof}

Let $V(G) = \{0,1, \ldots, (n+1)\}$. Because $g=n+2$, we have that $t=1$. Therefore, there will be an isolated vertex. We give the mixed star $M$ and a single vertex $(2q+2)$:
\begin{align*}
  M&=(0; 1, 2, \dots, (q+1), \overline{(q+2)}',\overline{(q+3)}'\dots, \overline{(2q+1)}')
\end{align*}

Let $D=\{1, 2, \ldots, \frac{n+1}{2}\}$ denote the pure edge set. Because $\frac{n+1}{2} = \frac{2q+2}{2} = q+1$, every difference in $D$ is used at least once by the pure differences in $M$. Also, the $q$ backward prime differences used in $M$ are $\{\overline{2}',\overline{3}', \dots, \overline{q+1}'\}$, so every difference is used at most twice.

Thus, we have constructed an almost $n$-star factor with the desired properties.

\end{proof}

\begin{lemma}
\label{todd2}
Let $1 \leq k \leq q$. There exists an almost $n$-star factor on $G$ with the following properties:
\begin{itemize}
\item Each forward difference $d \in \{1, 2, \dots, \frac{2nk+(n+1)}{2}\}$ appears at least once among the stars.
\item Each difference $d \in \{1, 2, \dots, \frac{2nk+(n+1)}{2}\}$ appears no more than twice among the stars.
\item There is one mixed star with $q+1$ pure edges and $q$ prime edges where $(w-1)$ edges are backward prime edges and $q-(w-1)$ edges are forward prime edges.
\item There is one little star $L$ of size $t-1$.
\end{itemize}

\end{lemma}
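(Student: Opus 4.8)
The plan is to construct the almost $n$-star factor explicitly, as was done for the base case $k=0$ in Lemma~\ref{todd1}, but now allowing many stars rather than a single mixed star. Recall that here $g = 2nk + (n+2)$ and $\mu = \frac{g-1}{2} = nk + \frac{n+1}{2} = nk + (q+1)$, so the target difference set is exactly $\{1,2,\dots,\mu\}$, and we must cover each forward difference in $\{1,\dots, nk + q + \frac12\}$... more precisely $\{1,\dots,\frac{2nk+(n+1)}{2}\}$, which equals $\{1,\dots,nk+q\}$ together with... — in any case, a contiguous initial segment of differences — at least once and no more than twice. Since $t \equiv g \pmod{n+1}$ and $g = 2nk+(n+2)$, one computes $t$ (and hence the size $t-1$ of the little star) in terms of $k$, $n$, and the residue $w$ defined just before the subsection; the little star will absorb exactly $t$ vertices.

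First I would isolate the single mixed star $M$ and pin down its pure and prime content: it should carry $q+1$ pure edges and $q$ prime edges, split as $w-1$ backward and $q-(w-1)$ forward, so that $M$ is genuinely an $n$-star ($q+1 + q = 2q+1 = n$ edges) and the backward/forward bookkeeping of $M$ matches the ``wrap-around'' count forced by $\mu$ and $w$. The model to imitate is $M = (0;\,1,2,\dots,q+1,\,\text{(some forward primes)},\,\text{(some backward primes)})$ as in Lemma~\ref{todd1}, now shifted so that its prime labels fall on the appropriate differences near $\mu$. Second, I would lay down the remaining $n$-stars to cover the middle range of differences: these come in pairs of stars, one pure and one prime, built from consecutive blocks of $n$ vertices, each star of the form $(c;\,c\pm 1,\dots)$ realizing $n$ consecutive differences, so that as the block index runs from $1$ to roughly $k-?$ we sweep out each difference in the bulk of $\{1,\dots,\mu\}$ exactly twice (once pure, once prime). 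Third, I would verify that the $t$ leftover vertices — those not used by $M$ or by the paired pure/prime stars — are exactly consumed by the little star $L$ of size $t-1$ plus, if $t=1$, nothing extra; and I would check $L$ uses only differences already accounted for within the allowed multiplicity two.

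The main obstacle I expect is the endpoint bookkeeping near the difference $\mu$, where forward and backward edges coincide in length and the residue $w$ governs how the ``prime'' copies of the top few differences must be oriented; getting the split $q = (w-1) + (q-(w-1))$ to come out of an honest vertex-labeled construction — rather than just asserting it — requires carefully choosing where $M$ sits and in which direction each of its prime edges points, and simultaneously making the vertex set partition cleanly so that $L$ lands on exactly the right $t$ vertices. A secondary nuisance is confirming the lower end: that difference $1$ through difference $q$ (or $q+1$) each genuinely appear, and appear at most twice, once the pure edges of $M$ and the first paired pure/prime block are both counted. I would organize the write-up by first giving $M$, then giving the general paired stars parametrized by a block index, then giving $L$, and finally checking the four bulleted properties in the order listed, using a short table of differences-versus-multiplicities to make the ``at least once / at most twice'' claims transparent.
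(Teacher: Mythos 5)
Your proposal correctly identifies the overall architecture — one mixed star carrying $q+1$ pure edges and $q$ prime edges split $w-1$ backward / $q-(w-1)$ forward, a family of pure and prime $n$-stars covering the remaining differences, and a little star on the $t$ leftover vertices — but it stops exactly where the proof has to start: no star is ever written down, and each of the four bulleted properties is left as something you ``would verify.'' The paper's proof is an explicit construction. The $k$ pure stars have centers $0,1,\dots,k-1$ and leaves $j,j-1,\dots,j-(n-1)$ with $j=(\mu-w)-n(i-1)$, so they cover $\{1,\dots,(n+1)k-1\}\setminus\{(n+1)\delta : \delta=1,\dots,k-1\}$ and leave exactly $q+1=w+(k-1)$ pure differences uncovered; this identity, which follows from $w=q+2-k$ in the range $1\le k\le q$, is the hinge on which the whole lemma turns and is nowhere in your write-up. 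The mixed star is then centered at $\mu-w+1$ with leaf lengths given explicitly as $A_1\cup A_2\cup B_1\cup B_2$, where $B_1$ supplies the $w-1$ backward primes and $B_2$ the $q-(w-1)$ forward primes, and the little star and the remaining prime stars are built greedily (smallest available center, largest available leaves) together with a short argument that their differences are mutually distinct and all smaller than every prime difference of $M$. You yourself flag that getting the backward/forward split ``to come out of an honest vertex-labeled construction rather than just asserting it'' is the main obstacle; that obstacle is the content of the lemma, so what you have is a plan rather than a proof.

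One concrete element of the plan would also fail as stated. You propose covering the bulk of $\{1,\dots,\mu\}$ by pairs of stars, one pure and one prime, built from consecutive blocks of vertices, each realizing $n$ consecutive differences, so that each bulk difference is hit exactly twice. A count rules this out: here $t=n+2-2k$, so there are $2k$ $n$-stars plus the little star, giving $2kn+(t-1)=2\mu-2k$ edges in all, and hence exactly $2k$ of the $\mu$ differences can appear only once. Moreover, a pure star and a prime star sitting on the same short block of consecutive vertices can only realize small differences, so the differences near $\mu$ are unreachable by such pairs; the paper gets them by giving the pure stars far-away leaves (centers at the bottom of the vertex set, leaves near $\mu-w$) and reserving the top differences for the mixed star's prime edges, whose forward/backward orientation is then forced by which leaf labels would exceed $g-1$. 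So beyond being incomplete, the specific covering scheme you sketch is not the one that can work.
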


\begin{proof}
Let $V(G) = \{0,1,\dots,2nk+n+1\}$ where $g=2nk+n+2$. In this case, we have $\mu=\frac{2nk+(n+1)}{2}$, $w= q+2-k$, and $|L|=t$. We describe the almost $n$-star factor on $G$ by giving a set of pure stars, $P_1$, a mixed star, $M$, a set of prime stars, $P_3$, and a little star, $L$.

\begin{align*}
  \textrm{Let } P_1&=\{(i-1); j, j-1, j-2, j-3, \dots, j-(n-1)\},  \\
&\mbox{where } i = 1, \ldots, k \mbox{ and } j=(\mu-w)-n\cdot (i-1).
\end{align*}

Let $D=\{1, 2, \ldots, \frac{2nk+n+1}{2}\}$  denote the set of all differences, and let $D_1$ be the differences used in $P_1$, that is $D_1 = \{1,2,3,\dots,(n+1)k-1\} \backslash \{(n+1)\delta | \delta = 1,2,\dots,k-1\}$. Because $P_1$ contains $k$ $n$-stars, $|D_1|$ is $nk$. Then, $D \backslash D_1$ will contain exactly $\frac{2nk+n+1}{2} - nk$ differences, which is equivalent to $\frac{n-1}{2}+1$. This is also equivalent to $w+(k-1)$ and $q+1$ by the following expression.
\begin{align*}
   \frac{2nk+n+1}{2} - nk =& \frac{2nk+n-1+2-2nk}{2} \\
        =& \frac{n-1}{2}+1 \\
        =& \frac{2q+1-1}{2}+1\\
        =& q+1\\
        =& (w-2+k)+1 \\ 
        =& w+(k-1).
\end{align*}
The $q+1$ unused pure differences will be used in the mixed star, $M$. This star will contain $q+1$ pure edges and $q$ prime edges.\\

Let $M$ be the mixed star with center $c=(\mu-w+1)$, which is the smallest available vertex after constructing the pure stars in $P_1$. We describe the leaves in $M$ by its set of edge lengths which we will denote by $l_M$. Let $l_M=A_1 \cup A_2 \cup B_1 \cup B_2$ where $A_1, A_2,B_1$, and $B_2$ are as follows. (Note that if $|A_1| = q+1$, $A_2$ will be empty, and in this case, we completely ignore the formula for $A_2$ below. Similarly, if $|B_1|=q$, $B_2$ will be empty, and similarly, we completely ignore the formula for $B_2$.)
\begin{align*}
    A_1&=\{(\mu-\delta)|\delta= 0,1,\dots,(w-1)  \}\\
    A_2&=\{(n+1)\delta|\delta= 1,2,\dots,(k-1)   \}\\
    B_1&=\{\overline{(\mu-\delta)'}|\delta= 0,1,\dots,(w-2)  \}\\
    B_2&=\{(\mu-w-\delta)'|\delta= 0,1,\dots,q-(w-1)-1  \}
    \end{align*}

Then, $M=(c;c+l_1,c+l_2,\dots,c+l_n)$ for each $l_i \in \{A_1 \cup A_2 \cup B_1 \cup B_2\}$. Note that $A_1$ and $A_2$ contain pure forward differences, $B_1$ contains prime backward differences, and $B_2$ contains prime forward differences. Furthermore, $|D \backslash D_1| = w + (k-1) = |A_1| + |A_2|$, and since $A_1 \cup A_2 \cup D_1 = D$, all differences in $D$ have appeared at least once. 

Construct $L$ by always choosing the next available smallest vertex for the center and the set of next available largest $t-1$ vertices for the leaves. Then, construct prime stars in $P_3$ by always choosing the next available smallest vertex for the center and the set of next available largest $n$ vertices for the leaves. This method ensures that all differences used in $L$ and $P_3$ are distinct. In fact, all vertex labels between $2\mu-w-q+1$ and $2nk+n+1$ were used in $M$, so the largest vertex label available for the longest leaf in $L$ was $2\mu-w-q$. The smallest vertex available for the center of $L$ was $\mu-w+2$. Thus, the largest possible difference in $L$ was $\mu-q-2$. The smallest possible difference that could be in $B_1 \cup B_2$ is $\mu-q$, and if $B_2$ is empty, the smallest possible difference that could be in $B_1$ is bigger than $\mu-q$. Thus, the largest possible difference in $L \cup P_3$ is smaller than the smallest possible prime difference in $M$. Therefore, every difference in $D$ appears at least once but at most twice.

An example of an almost $5$-star factor on $g=27$ vertices case is illustrated in Figure~\ref{v27}. Note that pure edges are colored red, prime edges in the mixed star are colored blue, prime edges in prime stars are colored light green, and the little star is colored black. \\

Thus, we have constructed an almost $n$-star factor with the desired properties.

\end{proof}

\begin{figure}[!htb]
    \centering
    \includegraphics[width=\linewidth]{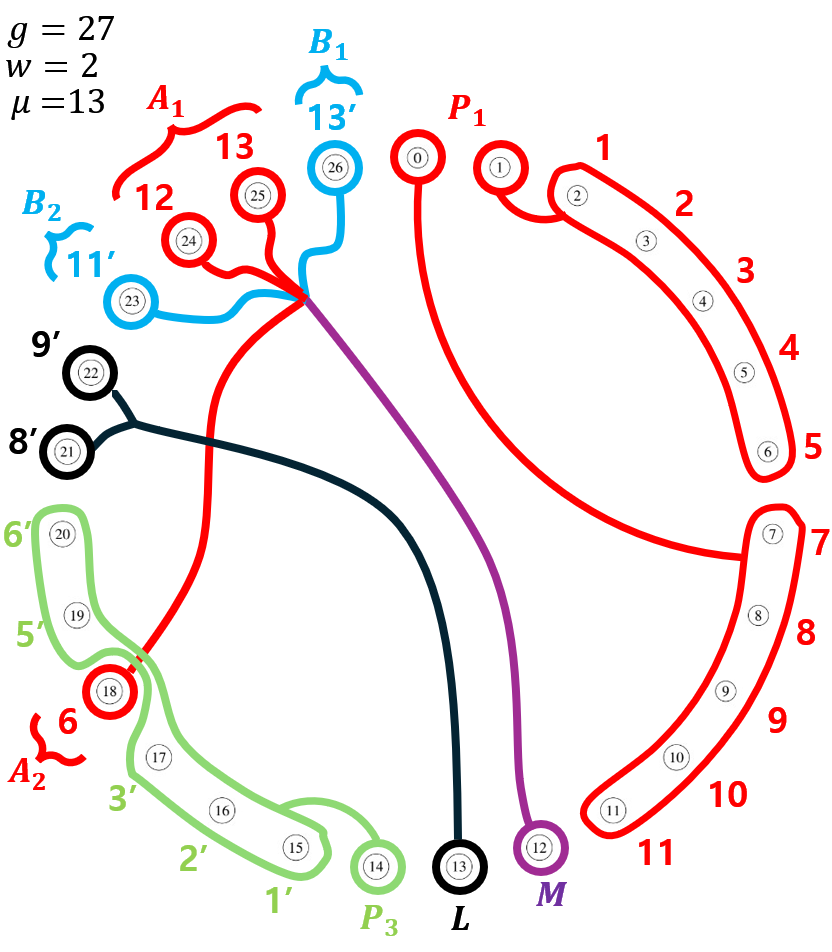}
    \caption{An almost $5$-star factor on $g=27$ vertices}
    \label{v27}
\end{figure}
\clearpage

\begin{lemma}
\label{todd3}
Let $k \geq q+1$. There exists an almost $n$-star factor on $G$ with the following properties:
\begin{itemize}
\item Each forward difference $d \in \{1, 2, \dots, \frac{2nk+(n+1)}{2}\}$ appears at least once among the stars.
\item Each forward difference $d \in \{1, 2, \dots, \frac{2nk+(n+1)}{2}\}$ appears no more than twice among the stars.
\item There is one mixed star with $q+1$ pure edges and $q$ foward prime edges.
\item There is one little star $L$ of size $t-1$.
\end{itemize}

\end{lemma}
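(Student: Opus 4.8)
\medskip
\noindent\emph{Proof plan.} The strategy is to adapt, with the modifications needed for large $k$, the construction used for Lemma~\ref{todd2}. I would work on $V(G)=\{0,1,\dots,2nk+n+1\}$, so that $g=2nk+n+2$ is odd, $\mu=\frac{2nk+n+1}{2}$, the order $t$ of the little star is the residue of $g$ modulo $n+1$, and $w$ (the residue of $\mu+1$ modulo $n+1$ taken in $\{1,\dots,n+1\}$) satisfies $w\equiv q+2-k\pmod{n+1}$. The almost $n$-star factor will again be presented as a disjoint union of a family $P_1$ of $k$ pure $n$-stars built from descending runs of $n$ consecutive differences, a single mixed star $M$ carrying the $q+1$ differences of $D=\{1,2,\dots,\mu\}$ left uncovered by $P_1$ as pure (forward) edges together with $q$ further differences as forward prime edges, a family $P_3$ of prime $n$-stars, and a little star $L$ of size $t-1$; the stars of $P_3$ and $L$ are to be filled greedily, always taking the smallest free vertex as a centre and the largest free vertices as leaves, which forces the differences appearing throughout $P_3\cup L$ to be pairwise distinct.

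The one structural change from Lemma~\ref{todd2} is the placement of the mixed star. For $1\le k\le q$ one may stack all of $P_1$ on an initial block of vertices and take the centre of $M$ to be the first free vertex, $(n+1)k$; but for $k\ge q+2$ one has $(n+1)k>\mu$, so a forward edge of $M$ of difference close to $\mu$ would overshoot the vertex $g-1$, and in particular $M$ could no longer carry all $q$ of its prime differences on forward edges from that position. Instead I would keep on an initial interval of vertices only a lower block of $P_1$, consisting of the largest number $a$ of runs with $(n+1)a\le\mu$ (so that for $k=q+1$ one recovers $a=k$ and the earlier placement), put the centre of $M$ at $(n+1)a\le\mu$ so that a forward edge of any difference up to $\mu$ still lands inside $V(G)$, and place the remaining $k-a$ runs of $P_1$ on a terminal interval; the $q+1$ short forward leaves of $M$ then realise $D$ minus the differences of $P_1$ (whose $k-1$ internal gaps must be matched against those leaves, playing the role of the sets $A_1,A_2$ of Lemma~\ref{todd2}), its $q$ long forward leaves realise the $q$ largest differences of $D$, and $P_3$ together with $L$ occupies the vertices left in between. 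The integer $a$, the differences assigned to the two blocks of $P_1$, and the break-point between them all shift with $w$, i.e.\ with $k$ modulo $n+1$.

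With the configuration fixed, the verification splits into the same two parts as in Lemma~\ref{todd2}. First, one checks that the $nk$ pure edges of $P_1$ together with the $q+1$ pure edges of $M$ realise every element of $D$ exactly once; this is a run-length count of the same flavour as the identity $\frac{2nk+n+1}{2}-nk=q+1$ there, complicated only by the splitting of $P_1$ into two blocks. Second, one checks that the largest difference which the greedy construction of $P_3\cup L$ can realise is strictly smaller than the smallest prime difference used by $M$; since the prime leaves of $M$ form a block near the top of $V(G)$ while the greedy stars only reach vertices below that block, the differences they produce stay below the prime differences of $M$, so that adjoining the prime edges of $M$, of $P_3$ and of $L$ raises the multiplicity of every difference to at most two. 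These two facts give both conclusions of the lemma.

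The main obstacle I anticipate is the bookkeeping behind the second part, aggravated by the dependence on $w$: because both the interior position of $M$ and the size $a$ of the lower block of $P_1$ move with $k$ modulo $n+1$, the vertex-interval estimate separating the prime differences of $M$ from everything $P_3\cup L$ can reach is genuinely case-sensitive, and pushing it through uniformly — or else isolating a bounded list of residue classes, with the boundary value $k=q+1$ (where $w=1$ and the Lemma~\ref{todd2}-style placement already works) treated on its own — is the delicate step. The remaining checks — that the stars are vertex-disjoint, cover all but $t$ vertices, and that $L$ is a $(t-1)$-star — are routine from the explicit vertex labels.
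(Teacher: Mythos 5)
Your plan has the same architecture as the paper's proof: the paper likewise abandons the Lemma~\ref{todd2} placement, keeps only $a=\frac{\mu+1-w}{n+1}=\lfloor\mu/(n+1)\rfloor$ descending runs in an initial block $P_1$, centres the mixed star at $\rho_1\in\{\mu-w+1,\,\mu,\,\mu+1\}$ so that its $q$ prime leaves become forward edges of difference at most $\mu$, and accounts for the remaining $k-a$ pure stars on vertices at and above $\mu$ (there split into one special star $P_0$ centred at $\mu$ and a greedy family $P_2$). However, two of your concrete claims fail for large $k$. First, $|D|=\mu=nk+q+1$, so $k$ runs of $n$ consecutive differences leave exactly $q+1$ differences uncovered; your assertion that the $k-1$ internal gaps of $P_1$ are matched against the $q+1$ short leaves of $M$ is therefore arithmetically impossible once $k>q+2$. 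The upper block cannot consist of gapped runs: in the paper it is built greedily from whatever differences remain after $P_0$, $P_1$ and the pure part of $M$ are fixed, and the count $|D_2|=n\bigl(k-\tfrac{\mu+1-w}{n+1}-1\bigr)$ is what makes that possible. Second, the upper block cannot sit on a ``terminal interval'': the difference $\mu$ must still be realised by a pure edge, and an edge of difference near $\mu$ spans half of $V(G)$; this is precisely why the paper introduces the star $P_0$ centred at $\mu$ containing the edge $\{\mu,2\mu\}$ and the $n$ largest leftover differences, with $P_2$ centred just above $\mu$ and leaves reaching toward $2\mu$.

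Beyond these slips, the step you explicitly defer --- that no vertex labels collide among $P_0$, $P_1$, $P_2$ and $M$, and that every difference produced by the greedy $P_3\cup L$ lies strictly below the smallest prime difference of $M$ --- is where essentially all of the paper's proof is spent: it is a case analysis on $w\in\{1,\dots,n+1\}$ and on $k=q+1$ versus $k>q+1$, carried out leaf by leaf. Asserting that such an analysis ``should go through uniformly or in a bounded list of residue classes'' does not establish the lemma; as written the proposal is a correct outline of the construction with one count wrong and the verification missing.
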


\begin{proof}
Let $V(G)=\{0,1,\dots,2nk+n+1\}$. We have $\mu=\frac{2nk+(n+1)}{2}$, $|L|=t$ is an integer such that $0 \leq t \leq q-1$ and $t \equiv g\pmod{n+1}$, and $w$ is an integer such that $1 \leq w \leq n+1$ and $\mu+1 \equiv w \pmod{n+1}$. We describe the almost $n$-star factor on $G$ by giving a set of pure stars, $P_0, P_1, P_2$, a mixed star, $M$, a set of prime stars, $P_3$, and a little star, $L$.

\begin{align*}
  \textrm{Let } P_1&=\{(i-1); j, j-1, j-2, j-3, \dots, j-(n-1)\},  \\
&\mbox{where } i = 1, \ldots, \frac{\mu+1-w}{n+1} \mbox{ and } j=(\mu-w)-n\cdot (i-1).
\end{align*}

Let $D=\{1, 2, \ldots, \frac{2nk+n+1}{2}\}$ denote the set of differences. Let $D_1$ be the set of pure differences used in $P_1$ and $D_w = \{1,2,\dots, \mu-w\}$. Because $P_1$ contains exactly $\frac{\mu+1-w}{n+1}$ pure stars and the longest pure length contained in a pure star in $P_1$ is $(\mu-w)$, we have $|D \backslash D_w| = w$ and $|D_w \backslash D_1|= (\frac{\mu+1-w}{n+1}-1)$. Then, the set $D \backslash D_1$ will contain exactly  $w+(\frac{\mu+1-w}{n+1}-1)$ differences.

Next, we construct an additional pure star, $P_0$. It is clear to see that the vertex labels $\mu$ and $2\mu$ have not been used in any star in $P_1$. Therefore, we will define $P_0$ to have center at the vertex labeled $\mu$, and containing the edge $\{\mu,2\mu\}$. The remaining edges are defined to be the edges with center $\mu$ and leaves whose lengths constitute the next largest available length from $D \backslash D_1$. Thus, the pure edges contained in $P_0$ are all forward pure edges. Define $D_0$ to be the set of pure differences that were used for the pure star contained in $P_0$.

Now, we let $\rho_1 < \rho_2$ be the two smallest available (or unused) vertex labels after $P_1$ is constructed. We will choose $\rho_1$ to be the center of the mixed star $M$. The vertex labeled $\rho_1$ will depend on the relationship between $k$ and $q$.
\begin{align*}
    \rho_1 =
        \begin{cases}
            (\mu), &\textrm{ if } w=1 \textrm{ and } k = q+1,\\
           (\mu+1), &\textrm{ if } w=1 \textrm{ and } k > q+1,\\
           (\mu-w+1),  &\textrm{ if } w\not=1.
    \end{cases}
\end{align*}

Then, $M$ will be the mixed star, which contains $q+1$ pure edges and $q$ forward prime edges. The center of $M$ is $\rho_1$, and choose the $q+1$ largest available lengths in $D \backslash (D_0 \cup D_1)$ to obtain $q+1$ pure edges. Then, choose the $q$ largest possible vertex labels to obtain $q$ prime edges. Define $D_{m(pure)}$ be the set of differences which contains these $q+1$ pure differences and $D_{m(prime)}$ to be the set of prime differences that appear in $M$. We must show that $M$ does not conflict with the stars in $P_1 \cup P_0$.

First we consider the pure edges in $M$. If $P_0$ is not empty, then let the smallest difference in $P_0$ be $z$. The smallest vertex label on a leaf in $P_0$ is $(\mu+z)$. The largest pure difference in $M$ is $z-1$ if $w=n+1$ or $z-(n+1)$ if $w \not= n+1$. The vertex label for this longest leaf is
\begin{align*}
    \rho_1 + (z - 1) =& \textrm{ } (\mu-w+ z), \textrm{ if } w = n+1 \\
    &\qquad \qquad \textrm{               or }\\
    \rho_1 + (z - (n+1)) =&
        \begin{cases}
           (\mu+z-n), &\textrm{ if } w=1,\\
           (\mu+z-w-n),  &\textrm{ if } w \not = 1 \textrm{ and } w \not = n+1. 
    \end{cases}
\end{align*}
In each case, the vertex label with the longest pure edge in $M$ does not conflict with vertex labels in $P_0$. We must also show that the vertex label of the shortest pure edge in $M$ does not conflict with the center of $P_0$, which is labeled, $\mu$. If $w=1$, it is trivial because $\rho_1 \geq \mu$ or $P_0$ does not exist. If $w \not =1$, then because the shortest pure edge in $M$ has the smallest difference in $D \backslash (D_0 \cup D_1)$, (which is $(n+1)$), the vertex label for this leaf is $\rho_1 + (n+1)$. Then, because $w \not= 1$, we have
\begin{align*}
    \rho_1 + (n+1) =& \mu-w+1+(n+1) > \mu.
\end{align*}
Thus, the vertex label of the shortest pure edge in $M$ does not conflict with the center of $P_0$.

Now, consider the vertex labels on the $q$ prime edges in $M$. We will show that the differences covered are all distinct, and the edges are forward edges. Note that if $k=q+1$, then $P_0$ is empty and $\rho_1 = \mu$, so it is trivial. If $w=n+1$, then after constructing $P_1$ and $P_0$, the largest possible unused vertex is labeled $(2\mu-w+1)$. So, in this case, the largest possible prime difference in $M$ is 
\begin{align*}
    (2\mu-w+1) - \rho_1 = \mu.
\end{align*}
If $w \not =n+1$, the largest possible vertex labeling is $(2\mu-w)$. So, the largest possible prime difference in $M$ is
\begin{align*}
    (2\mu-w) - \rho_1 =
        \begin{cases}
           \mu-2, &\textrm{ if } w=1 \textrm{ and } k>q+1\\
           \mu-1,  &\textrm{ if } w \not =1 \textrm{ and } w \not = n+1.
    \end{cases}
\end{align*}
Hence, all the prime differences in $M$ are distinct forward prime differences.

Next, we will give the pure stars in $P_2$. First, we count the number of pure differences that have not yet appeared. Let $D_2 = D \backslash (D_0 \cup D_1 \cup D_{m(pure)})$. Because $|D|=\frac{2nk+n+1}{2}, n=2q+1$, $D_{m(pure)} = q+1$, $|D_1| = n\cdot \frac{\mu+1-w}{n+1}$, and $|D_0| = n$, we have:
\begin{align*}
    |D_2| =& |D| - |D_{m(pure)}| - |D_1| - |D_0| \\
                    =& \frac{2nk+n+1}{2} - (q+1) - n\cdot \frac{\mu+1-w}{n+1} - n\\
                    =& \frac{2k(2q+1)+(2q+1)+1)}{2} - (q+1) - n\cdot \frac{\mu+1-w}{n+1} - n\\
                    =& k(2q+1) - n\cdot \frac{\mu+1-w}{n+1} - n\\
                    =& k \cdot n - n\cdot \frac{\mu+1-w}{n+1} - n\\
                    =& n(k - (\frac{\mu+1-w}{n+1}+1)).
\end{align*}   

Since the number of remaining pure differences is divisible by $n$, this set of differences can be covered by our final set of pure stars, $P_2$. Note that if $k < 4+3q$, then $|D_2| = 0$, in which case $D_2$ will be empty.

We construct the stars in $P_2$ by always choosing the next available smallest vertex label for the center and the set of next available largest $n$ differences in $D \backslash (D_0 \cup D_1 \cup D_{m(pure)})$ for the differences of its branches.

We must show that the set of vertex labels used in $P_2$ does not conflict with any vertex labels used in $M$ or $P_0$. First, we will show that it does not conflict with the center of $P_0$ which is labeled, $\mu$. By the definition of $\rho_2$, it is guaranteed to be the smallest vertex label used in $P_2$. If $\rho_2 > \mu$ it is trivial. If $\rho_2 < (\mu)$, the smallest difference in $D_2$ is $(n+1)$, and we have $\mu - \rho_2 < w \leq n+1$ by the definition of $w$. Thus, $(\rho_2+(n+1))$ is the smallest possible vertex label of a leaf in $P_2$. Since $\rho_2+n+1 \geq \rho_2+w > \mu$, it is impossible for any star in $P_2$ to conflict with the center of $P_0$.

We must also show that the vertex labels used in $P_2$ do not conflict with any vertex label used in $M$. The smallest difference used in $M$ is a pure difference, and we chose it as the $q+1$th largest available difference from $D \backslash (D_0 \cup D_1)$. Let this smallest difference used in $M$ be $z_1$. Then, the largest difference in $D_2$, call it $z_2$, is clearly smaller than $z_1$. By the method used to construct the pure stars in $P_2$, $z_2$ must be the difference of a branch adjacent to $\rho_2$, and the vertex label of this leaf is $\rho_2+z_2$. This is the largest vertex label appearing in $P_2$. However, we also have $z_2 + (n+1) = z_1$ and
\begin{align*}
    \begin{cases}
           \rho_1 + 2  = \rho_2, &\textrm{ if } w=2,\\
           \rho_1 + 1  = \rho_2,  &\textrm{ otherwise. }
    \end{cases}
\end{align*}
Thus, we can conclude $\rho_2 + z_2 \not= \rho_1 + z_1$. Then, since the vertex labels used in $P_2$ do not conflict with any labels used in $M$ or $P_0$, it is possible to construct $P_2$ via the method we described above.\\

 To finish building our almost $n$-star factor, we will construct prime stars with distinct differences so that each difference in $D$ appears at most twice.

 To construct $L$, we will choose the next available smallest vertex label for the center and the set of next available largest $(t-1)$ vertices for the leaves. If $t=1$, we will choose the next available largest vertex label for the isolated vertex. If we build $L$ by the method described above, the longest possible prime difference is always smaller than the smallest prime difference in $D_{m(prime)}$ (which is the prime difference used in the mixed star). Finally, we construct a set of prime stars, $P_3$, by always choosing the next available smallest vertex for the center and the set of next available largest $n$ vertices for the leaves. Similarly, if we build the prime stars in $P_3$ by this method, all differences used in $L$ and the stars in $P_3$ are distinct. Hence, every difference in $D$ has appeared at least once but at most twice.\\

We have constructed an almost $n$-star factor with the desired properties.

\end{proof}

\subsection{$k'$ even}
In this section, we construct almost $n$-star factors when the order of $L$ is even. In this case, $v= 2n(n+1)k + 2(n+1)$, $g=2nk+2$, and $\mu = \frac{g}{2}$.

\begin{lemma}
\label{teven1}
Let $k=1$. There exists an almost $n$-star factor on $G$ with the following properties:
\begin{itemize}
\item Each forward difference $d \in \{1, 2, \dots, n\}$ appears at least once among the stars.
\item Each forward difference $d \in \{1, 2, \dots, n\}$ appears no more than twice among the stars.
\item There is no mixed star.
\item There is no little star.
\end{itemize}

\end{lemma}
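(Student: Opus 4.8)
The plan is to exhibit an explicit $n$-star factor made of two pure stars, since for $k=1$ the parameters collapse to a very small case. First I would record the numerics: with $k=1$ we have $g=2nk+2=2n+2$, $\mu=\frac{g}{2}=n+1$, and $V(G)=\{0,1,\dots,2n+1\}$. Since $g=2(n+1)\equiv 0\pmod{n+1}$, the residue $t$ with $t\equiv g\pmod{n+1}$ is $t=0$, so by the conventions of Section~\ref{ASF} there is no little star and an almost $n$-star factor is simply an $n$-star factor, i.e.\ a partition of the $2n+2$ vertices of $G$ into two copies of $K_{1,n}$. This already disposes of the last two bullet points (no mixed star, no little star) as soon as the two stars I produce turn out to be pure.

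Next I would take the two stars
\[
S_0=(0;\,1,2,\dots,n),\qquad S_1=(n+1;\,n+2,n+3,\dots,2n+1).
\]
The vertex sets $\{0,1,\dots,n\}$ and $\{n+1,\dots,2n+1\}$ partition $V(G)$, so $\{S_0,S_1\}$ is indeed an $n$-star factor. It remains to check the difference conditions. For $S_0$, the edge $\{0,j\}$ with $1\le j\le n$ has difference $\min\{j,\,g-j\}=j$, because $j\le n<n+2\le 2n+2-j=g-j$; hence $S_0$ is a pure star covering the forward differences $1,2,\dots,n$ once each. The identical computation applied to $S_1$, where the edge $\{n+1,\,n+1+i\}$ with $1\le i\le n$ has difference $\min\{i,\,g-i\}=i$, shows $S_1$ is also a pure star covering the forward differences $1,2,\dots,n$ once each. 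Therefore every forward difference $d\in\{1,\dots,n\}$ appears exactly twice among the stars — in particular at least once and at most twice — and since both $S_0$ and $S_1$ consist entirely of pure edges there is no mixed star; combined with $t=0$ this yields all four stated properties.

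There is essentially no obstacle here: this is the base case of the $k'$-even family, and the only point demanding any care is the difference arithmetic near $\mu$. Note that the antipodal/matching difference $n+1=\mu=\frac{g}{2}$ is deliberately not used — every edge of difference $\mu$ coincides with its own wrap-around, so such edges would be awkward in the lifting construction of Section~4 — and my two-star factor uses only the forward differences $1,\dots,n$, which is precisely the input the later sections need.
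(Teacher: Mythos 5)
Your construction is exactly the paper's: the two stars $(0;1,\dots,n)$ and $(n+1;n+2,\dots,2n+1)$, with the same verification that each difference $1,\dots,n$ appears once in each star and that $t=0$ rules out a little star. The one discrepancy is your claim that both stars are pure: under the labelling convention of Section~\ref{ASF}, a difference occurring twice must receive one pure and one prime label, so the paper designates the second star as the prime star $((n+1);(n+2)',\dots,(2n+1)')$ --- this does not affect the truth of the lemma as stated, but it matters downstream, since Lemma~\ref{Part I} lifts pure and prime stars differently and treating both as pure would leave the differences $d\not\equiv 0\pmod{n+1}$ uncovered.
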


\begin{proof}

$V(G)=\{0,1, \ldots, (2n+1)\}$. We construct a pure star $P_1$ and a prime star $P_2$ as follows.
\begin{align*}
  P_1&=(0; 1, 2, \dots, n)\\
  P_2&=((n+1); (n+2)', (n+3)', \dots, (2n+1)')
\end{align*}

Let $D=\{1, 2, \ldots, n \}$ be the set of differences. It is clear that the set of differences covered by the edges contained in the pure star $P_1$ are exactly equal to $D$. Similarly, the set of differences covered by the edges contained in the prime star $P_2$ are exactly equal to $D$.

Thus, we have constructed an almost $n$-star factor with the desired properties.

\end{proof}

\begin{lemma}
\label{teven2}
Let $k \geq 2$. There exists an almost $n$-star factor on $G$ with the following properties:
\begin{itemize}
\item Each forward difference $d \in \{1, 2, \dots, nk\}$ appears at least once among the stars.
\item Each forward difference $d \in \{1, 2, \dots, nk\}$ appears no more than twice among the stars.
\item There is one little star $L$ of size $t-1$. If $t=0$, there is no little star.
\end{itemize}

\end{lemma}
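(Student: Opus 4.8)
The plan is to exhibit the almost $n$-star factor explicitly, as a disjoint union of a formulaic family of pure $n$-stars, a greedily built family of prime $n$-stars, and one little star, in the same spirit as Lemmas~\ref{todd2} and~\ref{todd3}, but adapted to the even situation $g=2nk+2$, $\mu=nk+1$, in which (as already in Lemma~\ref{teven1}) no mixed star is needed. The skeleton is: cover every forward difference in $\{1,\dots,nk\}$ at least once with pure stars using roughly $k$ of them, then use the vertices left over to build prime stars together with one little star, arranging the newly created differences to be mutually distinct so that nothing is used more than twice.

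First I would fix the set-up: vertices $\{0,1,\dots,2nk+1\}$, $n=2q+1$, the residue $w\in\{1,\dots,n+1\}$ with $\mu+1\equiv w\pmod{n+1}$, and $t\equiv g\equiv 2(1-k)\pmod{n+1}$ with $0\le t\le n-1$, so that $t$ is even as the subsection requires. Next I would write a family $P_{1}=\{(i-1);\,j,\,j-1,\dots,j-(n-1)\}$ of pure stars whose centres are the smallest vertices $0,1,\dots$ and whose leaves descend from a value $j$ depending linearly on $i$, chosen so that $P_{1}$ sweeps out a long block of forward differences bounded by $\mu$; as in Lemma~\ref{todd3}, such a family misses exactly the differences in one residue class modulo $n+1$ (the ``gaps'' between consecutive stars), and these are recovered by one or two extra pure stars anchored on the vertices near the midpoint $\mu$, just as $P_{0}$ and $P_{2}$ are in Lemma~\ref{todd3}. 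The arithmetic heart of this step is the analogue of the identity $|D_{2}|=n(\cdots)$ from Lemma~\ref{todd3}: after deleting the differences covered by these pure stars from $\{1,\dots,nk\}$, a multiple of $n$ differences must remain, so that the remaining pure stars are complete $n$-stars.

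The still-unused vertices are then organised into a family $P_{3}$ of prime stars and a little star $L$ by the usual greedy rule: take the smallest free vertex as the centre and the largest free vertices as the leaves (the largest $t-1$ of them for $L$, the largest $n$ of them for each star of $P_{3}$). As in Lemmas~\ref{todd2} and~\ref{todd3}, this rule forces every prime difference produced in $L\cup P_{3}$ to be strictly smaller than the smallest pure difference already used by the construction, whence no difference in $\{1,\dots,nk\}$ occurs more than twice; and because the pure stars already cover every element of $\{1,\dots,nk\}$, the ``at least once'' requirement is automatic. When $t=0$ the little star disappears and the construction is a genuine $n$-star factor; when $t=2$ it is a single edge, and so on.

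The step I expect to be the main obstacle is the one that makes Lemma~\ref{todd3} lengthy: confirming that the extra pure stars near the midpoint collide neither with $P_{1}$ nor with the greedily built prime stars, and that the differences they pick up are precisely the ones $P_{1}$ skipped. This will split into sub-cases according to the value of $w$ (and possibly according to how $k$ compares with $q$), each requiring its own explicit inequality between vertex labels in the style of the $\rho_{1},\rho_{2}$ analysis of Lemma~\ref{todd3}. Pinning down the range of the index $i$, the formula for $j$, and the residue of $t$ so that all three are mutually consistent is where the real work lies; once that is done, the ``at most twice'' and ``at least once'' conclusions follow exactly as in the earlier lemmas.
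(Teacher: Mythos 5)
Your proposal follows essentially the same route as the paper's proof: the formulaic family $P_1$ of pure stars on the low-numbered centres, one or more extra pure stars anchored at and just above the midpoint $\mu$ (the analogues of $P_0$ and $P_2$ from Lemma~\ref{todd3}), the divisibility-by-$n$ count for the leftover pure differences, and the greedy construction of the little star $L$ and the prime stars $P_3$ from the remaining vertices. One sentence needs repair, though: the reason no difference is covered three times cannot be that the prime differences of $L\cup P_3$ are strictly smaller than the smallest \emph{pure} difference used (the pure stars cover all of $\{1,\dots,nk\}$, including difference $1$, so that comparison is vacuously impossible); the correct argument, which you do state in your opening paragraph and which the paper uses, is that the greedy rule makes the prime differences of $L\cup P_3$ mutually distinct while each difference of $D$ occurs exactly once among the pure stars.
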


\begin{proof}
Let Let $V(G) = \{0,1,\dots,2nk+1\}$ where $g=2nk+2$. We have $\mu=nk+1$, $t$ is an integer such that $0 \leq t \leq g-1$ and $t \equiv g \pmod{n+1}$, and $w$ is an integer such that $1 \leq w \leq n+1$ and $\mu+1 \equiv w \pmod{n+1}$. We construct the following sets of pure stars, $P_0$, $P_1$, $P_2$, a set of prime stars $P_3$, and a little star $L$, on $G$.

\begin{align*}
  \textrm{Let } P_1&=\{(i-1); j, j-1, j-2, j-3, \dots, j-(n-1)\},  \\
&\mbox{where } i = 1, \ldots, \frac{\mu+1-w}{n+1} \mbox{ and } j=(\mu-w)-n\cdot (i-1).
\end{align*}

Let $D=\{1, 2, \ldots, nk\}$  denote the set of all differences. Let $D_1$ be the pure lengths used in $P_1$. Since $P_1$ contains exactly $\frac{\mu+1-w}{n+1}$ pure stars and the longest pure length contained in a pure star in $P_1$ is $(\mu-w)$, the set $D \backslash D_1$ will contain exactly  $w+(\frac{\mu+1-w}{n+1}-1)$ pure differences.

Next, we construct $P_0$ which will contain exactly one pure star. We define the pure star in $P_0$ to have center at the vertex labeled $\mu$ and leaves defined by forward edges with the $n$ largest available differences from $D \backslash D_1$. The largest vertex label used in $P_1$ was $(\mu-w)$, and thus, the center of $P_0$, $\mu$, had not been used. The largest difference in $D \backslash D_0$ is $nk$, so the endvertex of the edge with this difference will be $\mu + nk = nk+1 + nk = 2nk+1$ which is an available vertex label on $V$. Hence, it is possible to construct $P_0$ via the method described above. Define $D_0$ to be the set of pure differences that were used for the single pure star in $P_0$.

We must construct one more set of pure stars $P_2$ so that every difference in $D$ appears at least once. Note that if $k < n+3$, $P_2$ will be empty because $D = D_1 \cup D_0$. If $k \geq n+3$, we will construct as many pure stars as needed by always choosing the next available smallest vertex labeling that is greater than $(\mu)$ for the center. The set of next available largest $n$ differences in $D \backslash (D_0 \cup D_1)$ will be used to define each star's set of leaves. Let $D_2$ be the set of pure differences that covered by the stars in $P_2$. Then, we have $D = (D_0 \cup D_1 \cup D_{2})$.

To prove that we can construct $P_2$ via the method described above, we must first show that $D \backslash (D_0 \cup D_1)$ is divisible by $n$. However, since we have $|D| = 2nk$ and $|D_0 \cup D_1|$ is clearly divisible by $n$, it is obvious that $|D \backslash (D_0 \cup D_1)|$ is divisible by $n$. Finally, we will prove that the set of vertex labels used in $P_2$ does not conflict with any vertex labels appearing in $P_0$ or $P_1$. The smallest vertex label used in $P_2$ is $(\mu+1)$, thus it is impossible to conflict with any vertex label used in $P_1$. It is also obvious that it does not conflict with the center of $P_0$, which is $\mu$. At last, we will show that $P_2$ does not conflict with any vertex label of leaves in $P_0$. The smallest difference used in $P_0$ is a pure difference, and we chose it as the $n$-th largest available difference from $D \backslash (D_1)$. Let this smallest difference used in $P_0$ be $z_1$. Then, the largest difference in $D_2$, call it $z_2$, is clearly smaller than $z_1$. By the method used to construct the pure stars in $P_2$, $z_2$ must be the difference of a leaf adjacent to $(\mu+1)$, and the vertex label of this leaf is $(\mu+1)+z_2$. This is the largest vertex label appearing in $P_2$. However, because we have $z_2 + (n+1) = z_1$, it is clear that the largest vertex label in $P_2$, $(\mu+1)+z_2$, is impossible to conflict with the smallest vertex label in $P_0$, $\mu+z_1$. Thus, we can conclude that it is possible to construct a set of pure stars $P_2$ via the method we described above.\\

By constructing $P_0, P_1, P_2,$ with pure differences $(D_0 \cup D_1 \cup D_{2}) = D$, we have shown that each difference in $D$ has appeared exactly once. Now, to finish building the almost $n$-star factor, we will construct prime stars which contain distinct forward differences so that each difference in $D$ appears at most twice. To complete our construction, we will construct the little star $L$ first, then construct the set of prime stars $P_3$.
 
 To construct $L$, we will choose the next available smallest vertex label for the center and the set of next available largest $(t-1)$ vertices for the leaves. If we build $L$ in this way, the vertex labeling of the center will be $(\mu-w+1)$, and the vertex label of its longest leaf will be $(2\mu-w)$. Thus, the largest difference contained in $L$ will be $(2\mu-w)-(\mu-w+1) = \mu-2 = nk-1$. Then, since $nk-1$ is in $D$, we've shown that all differences in $L$ are forward prime differences.
 
 We construct prime stars in $P_3$ by always choosing the next available smallest vertex for the center and the set of next available largest $n$ vertices for the leaves. If we build the prime stars in $P_3$ by this method, all differences used in $P_3$ will be smaller than any of the differences used in $L$. Hence, every difference in $D$ will have appeared at least once but at most twice.

We have constructed an almost $n$-star factor with the desired properties. \\

\end{proof}


\section{Part I factors}
In this section, we use the almost $n$-star factors of $G$ that were built in Section $3$ to build $n$-star factors of $K_v$. Recall that $v=n(n+1)k' + 2(n+1)$ for some non-negative integer $k'$. In Lemmas ~\ref{todd1} -~\ref{teven2}, we showed that for any $k'$, we can construct an almost $n$-star factor on $G=K_g$ which $g=\frac{v}{n+1}$. The following result will show how to obtain $g$ $n$-star factors on $K_v$.

\begin{lemma} 
\label{Part I}
(Part I factors) If there exists an almost $n$-star factor on $G$ with the properties described in Lemmas~\ref{todd1} -~\ref{teven2}, then there exists $g$ $n$-star factors of $K_v$.
\end{lemma}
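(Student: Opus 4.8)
The plan is to take the single almost $n$-star factor $F$ on $G = K_g$ and "blow up" each vertex into a group of $n+1$ vertices, so that $V(K_v)$ is partitioned into $g$ groups $V_0, \dots, V_{g-1}$ of size $n+1$, with $V_i$ corresponding to vertex $i$ of $G$. Since $v = g(n+1)$, this is the natural correspondence. The edges of $K_v$ split into two types: the \emph{group edges}, lying inside some $V_i$, and the \emph{cross edges}, joining $V_i$ to $V_j$ for $i \neq j$. The cross edges between $V_i$ and $V_j$ form a copy of $K_{n+1,n+1}$, and we should think of the difference $d = D(\{i,j\})$ in $G$ as a label attached to this entire bipartite block. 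The $g$ factors we build will use \emph{only} cross edges; the group edges (each $K_{n+1}$ on a $V_i$, which has a $1$-factorization into $n$ factors since $n+1$ is even, plus leftover) are deliberately left for Part II / Section 5.

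First I would fix, for each difference $d$ used by $F$, a decomposition of the corresponding $K_{n+1,n+1}$ into $n+1$ perfect matchings (the standard $1$-factorization of $K_{n+1,n+1}$, indexed by $\mathbb{Z}_{n+1}$). Then, for each star $S = (c; c+\ell_1, \dots, c+\ell_n)$ of $F$ with center $c$ and $n$ leaves reached by differences $\ell_1,\dots,\ell_n$, and for each index $a \in \{0,1,\dots,n\}$, I would build one $n$-star in $K_v$: its center is a chosen vertex $x \in V_c$, and its $n$ leaves are the $n$ vertices matched to $x$ in the $a$-th matching of the blocks labelled $\ell_1, \dots, \ell_n$ respectively (one leaf in each neighbouring group $V_{c+\ell_i}$). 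Running $a$ over all $n+1$ values and doing this simultaneously for every star of $F$ (including the little star, treated as a star with $t-1$ leaves, which just means $t-1$ of the groups contribute their matched vertex and the rest sit as "isolated blocks" — here is where we must check each group is covered), we obtain $n+1$ candidate factors from one almost $n$-star factor. The key bookkeeping point is that within a single factor (a single value of $a$), every vertex of $V_c$ serves as the center of exactly one star when $F$ restricted to centers behaves correctly, and every vertex of a leaf-group is hit exactly once — this is exactly the statement that the $a$-th matchings, taken across all blocks, assemble into a perfect matching structure compatible with "$F$ is a factor of $G$." I would verify this by noting that each vertex $i$ of $G$ is in exactly one component of $F$ (as either a center or a leaf or the isolated/little-star vertex), so each group $V_i$ is touched by exactly one component, and the $1$-factorization structure of the blocks then guarantees the vertices of $V_i$ are partitioned correctly inside that component's $n+1$ blown-up copies.

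Then I would count: each almost $n$-star factor $F$ yields $n+1$ $n$-star factors of $K_v$; but we need $g$ of them. Here the parity split from Section~3 enters. When $k'$ is even, $g = 2nk+2$, and when $k'$ is odd, $g = 2nk + n + 2$; in either case $g \equiv 2 \pmod{2n}$ or similar, and one checks $g = (n+1) \cdot (\text{number of almost factors needed}) $ does \emph{not} hold on the nose, so instead we use the almost $n$-star factors for a \emph{range} of parameters $k$ (as provided by Lemmas~\ref{todd1}--\ref{teven2}, which cover all small cases $k=0$, $1 \le k \le q$, $k \ge q+1$, etc.), each contributing its quota of factors and its quota of covered differences, and concatenate. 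The differences covered "at least once but at most twice" across the various almost factors is precisely the property engineered in Section~3, and it ensures that across all $g$ factors we build, each cross-difference block $K_{n+1,n+1}$ has each of its $n+1$ matchings used the correct number of times — once or twice — with the residual matchings (those not used) left over for Section~5.

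The main obstacle I expect is the final accounting: showing that the total collection of $g$ $n$-star factors produced is \emph{consistent}, i.e. edge-disjoint and such that the leftover cross-edges together with all group-edges can still be finished off later. Concretely, one must show that a difference $d$ appearing once in the almost $n$-star factor contributes $n+1$ factors' worth of matchings at $d$ across the big construction, a difference appearing twice contributes $2(n+1)$, and that $2(n+1) \le$ (number of matchings available, counting the two bipartite directions if $d$ is used by both a "pure" and a "prime" edge), so nothing is over-used — plus confirming the little star's $t-1$ leaves and the isolated vertices of $F$ do not leave any group $V_i$ uncovered in any of the $g$ factors. The star/little-star/isolated-vertex trichotomy of $F$ is exactly what makes this work, but verifying it requires carefully matching the combinatorial design in Section~3 to the blow-up, and that is the crux of the argument.
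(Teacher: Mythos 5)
Your blow-up picture is the right starting point and is essentially the paper's setup: each vertex $u$ of $G$ becomes the block $\{(n+1)u,(n+1)u+1,\dots,(n+1)u+n\}$ of $K_v$, and the pure/prime labels of $F$ control how the $n+1$ copies of each edge are threaded between blocks. But your accounting of how many factors this yields is wrong, and that is a genuine gap. For a fixed matching index $a$ you take one star per component of $F$; that star covers only $n+1$ of the $(n+1)^2$ vertices in the blow-up of its component, so each of your ``$n+1$ candidate factors'' spans only about $g$ of the $v=g(n+1)$ vertices and is not a factor of $K_v$. The paper instead takes the union over \emph{all} $n+1$ indices $i$ of the stars $s_i$, $p_i$, $m_i$, $L_i$ arising from \emph{every} component of $F$ to form a single spanning base factor $B$, and then obtains the required $g$ factors as the translates $B+(n+1)j$ for $j=0,1,\dots,g-1$; the ``each difference appears at least once but at most twice'' property engineered in Section 3 is exactly what makes these $g$ translates edge-disjoint. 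Your proposal has no analogue of this development step, and the substitute you offer --- concatenating almost $n$-star factors ``for a range of parameters $k$'' --- is not available: $k$ is determined by $v$, and Lemmas~\ref{todd1}--\ref{teven2} supply exactly one almost factor for the given $g$, not a family of them.

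Separately, the technical heart of the paper's proof is precisely what you defer: the explicit constructions showing that the mixed star (whose backward prime edges force the $\oplus$ sign convention) and the little star (with separate cases $t=1$, $1<t<q+2$, $t=q+2$, $t>q+2$, and $n=3$) can be blown up into $n$-stars that complete the spanning of $B$ while keeping the residue bookkeeping consistent for the later balanced star arrays. Observing that ``the little star is where we must check each group is covered'' identifies the difficulty but does not resolve it; without those case constructions the lemma is not established.
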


\begin{proof}

Let $V = \{0,1,2,\dots, v-1 \}$ be the vertex set of $K_v$, and let $F$ be the almost $n$-star factor constructed from Lemmas~\ref{todd1} -~\ref{teven2} on  $g=\frac{v}{n+1}$ vertices. We partition $V$ into $n+1$ subsets as follows. Let $V = \bigcup_{i=0}^n V_i$ where $V_i=\{\nu \in V | \nu\equiv i \pmod{(n+1)} \}$. \\

{\bf Case 1: $k'$ odd} 

Let $k'=2k+1$, so $g=2nk+(n+2)$, with $k \geq 0$. For any $k \geq 0$, consider the almost $n$-star factor constructed in Lemmas~\ref{todd1} -~\ref{todd3}. Recall that $F$ consists of pure stars, prime stars, a mixed star, and a little star.

For each star, let $c$ be the vertex label of the center vertex, and $l_i$ or $l_i'$ with $i=1,2,\dots,n$ to be the vertex labels of the leaves. As described in Section $3$, a leaf $l_i$ is an end vertex of an edge with a pure difference, and a leaf $l_j'$ is an end vertex of an edge with a prime difference. Note that $l_i < l_j$ and $l_i' < l_j'$ if $i < j$. However, it need not be that $l_i < l_j'$ if $i < j$.\\

For each pure star $s=(c; l_1, l_2, \dots, l_n) \in F$, construct $n+1$ stars 
$s_i=((n+1)c+i; 
(n+1)l_1+i, 
(n+1)l_2+i, 
\dots, 
(n+1)l_n+i)$ for $i=0,1,2,3,\dots, n$. By constructing these stars, all vertices in the set $\{c(n+1)+i, \l_1(n+1)+i, l_2(n+1)+i, \dots, l_n(n+1)+i | i \in \mathbb{Z}_n \}$, $\forall s \in F$ have been used. An example for $i=0,1$ is illustrated as red stars in Figure~\ref{v162}. \\

For each prime star $p=(c; l_1', l_2', \dots, l_n') \in F$, construct $n+1$ stars 
$p_i=((n+1)c+i; 
(n+1)l_1'+((n+i) \pmod{(n+1)}), 
(n+1)l_2'+((n-1+i) \pmod{(n+1)}), 
(n+1)l_3'+((n-2+i) \pmod{(n+1)}), 
\dots, 
(n+1)l_5'+((1+i) \pmod{(n+1)}))$ for $i=0,1,2,\dots,n$. By constructing these stars, all vertices in the vertex set $\{c(n+1)+i, \l_1(n+1)+i, l_2(n+1)+i, \dots, l_n(n+1)+i |  i \in \{0,1,\dots,n\} \}$, $\forall p \in F$ have been used on one of $p_i$. An example for $i=0,1$ is illustrated as light green stars in Figure~\ref{v162}.\\

Before we construct the mixed star, we introduce a symbol $\oplus$. If $\oplus$ is in an expression containing a label $l_i'$ representing a forward prime edge, then $\oplus$ will be replaced by $+$, and if $\oplus$ is in an expression containing a label $l_i'$ representing a backward prime edge, then $\oplus$ will be replaced by $-$. Now, the mixed star has $q+1$ pure edges and $q$ prime edges. For the mixed star $m=(c; l_1, l_2, \dots, l_{q+1}, l_1', l_2',\dots,l_q') \in F$, construct $n+1$ stars 
$m_{i}=((n+1)c+i; 
(n+1)l_{1}+i, 
(n+1)l_{2}+i,
\dots,
(n+1)l_{q+1}+i, 
(n+1)l_{1}'+((i\oplus1) \pmod{(n+1)}), 
(n+1)l_{2}'+((i\oplus2) \pmod{(n+1)}), 
\dots,
(n+1)l_{q}'+((i\oplus q) \pmod{(n+1)}))$ for $i=0,1,2,\dots,n$. By constructing these stars, all vertices in the vertex set $\{c(n+1)+i, \l_1(n+1)+i, l_2(n+1)+i, \dots, l_n(n+1)+i | \forall i \in \{0,1,\dots,n\}, M \in F  \}$ have been used on one of $m_i$. An example for $i=0,1$ is illustrated as red (pure) and blue (prime) endvertices with purple edges in Figure~\ref{v162}.\\\\

For the little star $L = (c;l_1,l_2,\dots,l_{t-1}) \in F$, we will construct $t$ $n$-stars $L_i$ for $i=0,1,\dots,t-1$ based on following conditions.\\

If $t = 1$ with $L = \{ c \}$, we construct $L_0 = ( (n+1)c ; (n+1)c+1, (n+1)c+2, \dots, (n+1)c+n) $. \\

Note that if $n=3$, the only possibilities for odd $t$ are $1$ or $3$, so we will construct these cases separately as below. 

If $n=3$ and $t=1$, construct $L_0 = ( 4c ; 4c+1, 4c+2, 4c+3) $. 

If $n=3$ and $t=3$, construct $L_0, L_1,$ and $L_2$ as follows: 
\begin{align*}
            L_0 =& (4c ; 4l_1+1, 4l_1+2, 4l_1+3)\\
            L_1 =& (4c+1 ; 4c+3, 4l_1 + 0, 4l_2 + 2)\\
            L_2 =& (4c+2 ; 4l_2+0, 4l_2+1, 4l_2+3)
\end{align*}

If $1 < t < q+2$, we first construct $t-2$ stars $L_i$ for $i=0, \dots, t-3$.
\begin{align*}
    L_i = ((n+1)c+i \textrm{ ; } &(n+1)l_{i+1}+(i+1)\pmod{(n+1)}, \\
&(n+1)l_{i+1}+(i+2)\pmod{(n+1)}, \\
&(n+1)l_{i+1}+(i+3)\pmod{(n+1)}, \\
&\qquad \qquad \vdots\\
&(n+1)l_{i+1}+(i+n)\pmod{(n+1)}) 
\end{align*}

Then, we have two more stars $L_{t-2}$ and $L_{t-1}$:
\begin{align*}
    L_{t-2} = ((n+1)c &+ (t-2) \textrm{ ; }\\
    &(n+1)l_1 + 0 ,(n+1)l_2 + 1 , \dots, (n+1)l_{t-2} + t-3,\\
        &(n+1)c + (t-1),(n+1)c + (t-0),\dots,(n+1)c + (q),\\
        &(n+1)l_{t-1} + (q+1),\\
        &(n+1)c + (q+2),(n+1)c + (q+3),\dots,(n+1)c + (n) )
\end{align*}

 and 

\begin{align*}
    L_{t-1}& = ((n+1)c + (q+1) \textrm{ ; } \\
&(n+1)l_{t-1} + (0),
(n+1)l_{t-1} + (1),
(n+1)l_{t-1} + (2),
\dots,
(n+1)l_{t-1} + (q),\\
&(n+1)l_{t-1} + (q+2),
(n+1)l_{t-1} + (q+3),
\dots,
(n+1)l_{t-1} + (n) ).
\end{align*}
\\

If $t = q+2$, we first construct $t-2$ $n$-stars $L_i$ for $i=0, \dots, t-3$.
\begin{align*}
    L_i = ((n+1)c+i \textrm{ ; } &(n+1)l_{i+1}+(i+1)\pmod{(n+1)}, \\
&(n+1)l_{i+1}+(i+2)\pmod{(n+1)}, \\
&(n+1)l_{i+1}+(i+3)\pmod{(n+1)}, \\
&\qquad \qquad \vdots\\
&(n+1)l_{i+1}+(i+n)\pmod{(n+1)}) 
\end{align*}

Then, we have two stars $L_{t-2}$ and $L_{t-1}$:
\begin{align*}
    L_{t-2} &= ((n+1)c+(t-2) \textrm{ ; } \\ 
&(n+1)l_1 + 0,
(n+1)l_2 + 1,
(n+1)l_3 + 2,
\dots,
(n+1)l_{t-2} + (t-3),\\
&(n+1)l_{t-1} + (t-1),\\
&(n+1)c+(t),
(n+1)c+(t+1),
(n+1)c+(t+2),
\dots,
(n+1)c+(n) )
\end{align*}

and 
\begin{align*}
    L_{t-1} &= ((n+1)c+(q+1) \textrm{ ; } \\
&(n+1)l_{t-1} + 0,
(n+1)l_{t-1} + 1,
(n+1)l_{t-1} + 2,
\dots,
(n+1)l_{t-1} + (q+0),\\
&(n+1)l_{t-1} + (q+2),
(n+1)l_{t-1} + (q+3),
\dots,
(n+1)l_{t-1} + (n) ).
\end{align*}
\\

If $t > q+2$, then we construct $t-1$ $n$-stars $L_i$ for $i=0, \dots, t-2$.  
\begin{align*}
    L_i = ( (n+1)c+i \textrm{ ; } &(n+1)l_{i+1}+(i+1)\pmod{(n+1)}, \\
&(n+1)l_{i+1}+(i+2)\pmod{(n+1)}, \\
&(n+1)l_{i+1}+(i+3)\pmod{(n+1)}, \\
& \qquad \qquad \vdots\\
&(n+1)l_{i+1}+(i+n)\pmod{(n+1)} )
\end{align*}

Then, we construct a star $L_{t-1}$,
\begin{align*}
    L_{t-1} &= ((n+1)c+(t-1) \textrm{ ; } \\
&(n+1)l_1 + 0,
(n+1)l_2 + 1,
(n+1)l_3 + 2,
\dots,
(n+1)l_{t-1} + (t-2),\\
&(n+1)c+(t+0),
(n+1)c+(t+1),
(n+1)c+(t+2),
\dots,
(n+1)c+(n) )
\end{align*}

By constructing these stars, all vertices in the vertex set $\{c(n+1)+i, \l_1(n+1)+i, l_2(n+1)+i, \dots, l_{t-1}(n+1)+i | \forall i \in \{0,1,\dots,t-1\}, \forall L \in F   \}$ have been used on one of $L_i$. 
An example is illustrated as black stars in Figure~\ref{v162}.\\

Thus, by constructing $n$-stars via the method above, we successfully constructed an $n$-star factor on $V = \{0,1,\dots,g(n+1)-1\}$, $B$. We obtain a total of $g$ $n$-star factors by taking $B+(n+1)j$ for $j=0,1,\dots, g-1$.
\\

\begin{figure}[!htb]
    \centering
    \includegraphics[width=\linewidth]{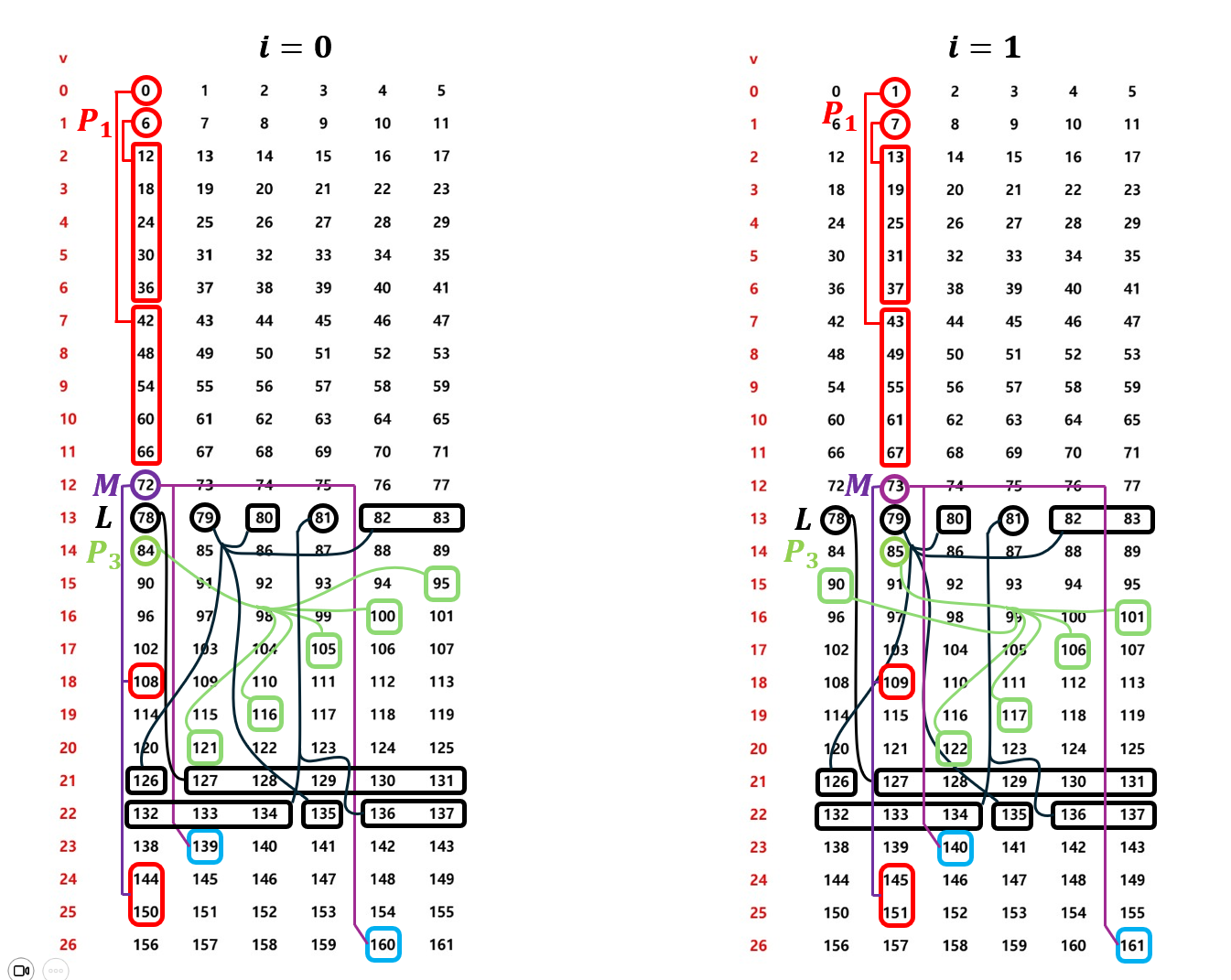}
    \caption{Illustration of a partial $5$-star factor on $v=162$ vertices from the almost $5$-star factor on $27$ vertices in Figure$~\ref{v27}$}
    \label{v162}
\end{figure}



{\bf Case $2:$ $k'$ even:} 

Let $k'=2k$, so $g=2nk+2$, with $k \geq 0$. \\

 For any $k \geq 0$, let $F$ be the almost $n$-star factor constructed from Lemma~\ref{teven1} or Lemma~\ref{teven2}. Recall that $F$ consists of pure stars, prime stars, and a little star. 

For each star, let $c$ be the vertex label of the center vertex, and $l_i$ or $l_i'$ with $i=1,2,\dots,n$ to be the vertex labels of the leaves. As described in Section $3$, a leaf $l_i$ is an end vertex of an edge with a pure difference, and a leaf $l_j'$ is an end vertex of an edge with a prime difference. Note that $l_i < l_j$ and $l_i' < l_j'$ if $i < j$.\\

For each pure star $s=(c; l_1, l_2, \dots, l_n) \in F$, construct $n+1$ stars 
$s_i=((n+1)c+i; 
(n+1)l_1+i, 
(n+1)l_2+i, 
\dots, 
(n+1)l_n+i)$ for $i=0,1,2,3,\dots, n$. By constructing these stars, all vertices in the vertex set $\{c(n+1)+i, \l_1(n+1)+i, l_2(n+1)+i, \dots, l_n(n+1)+i | i \in \{0,1,\dots,n\} \}$, $\forall s \in F $ have been used on one $s_i$.\\

For each prime star $p=(c; l_1', l_2', \dots, l_n') \in F$, construct $n+1$ stars 
$p_i=((n+1)c+i; 
(n+1)l_1'+((n+i) \pmod{(n+1)}), 
(n+1)l_2'+((n-1+i) \pmod{(n+1)}), 
(n+1)l_3'+((n-2+i) \pmod{(n+1)}), 
\dots, 
(n+1)l_5'+((1+i) \pmod{(n+1)}))$ for $i=0,1,2,\dots,n$. By constructing these stars, all vertices in the vertex set $\{c(n+1)+i, \l_1(n+1)+i, l_2(n+1)+i, \dots, l_n(n+1)+i |  i \in \{0,1,\dots,n\} \}$, $\forall p \in F$ have been used on one of $p_i$.\\

Then, for the little star $L = (c;l_1,l_2,\dots,l_{t-1}) \in F$, we first construct $t-1$ $n$-stars $L_i$ for $i=0, \dots, t-2$ as follows:
\begin{align*}
    L_i = ( (n+1)c+i \textrm{ ; } &(n+1)l_{i+1}+(i+1)\pmod{(n+1)}, \\
&(n+1)l_{i+1}+(i+2)\pmod{(n+1)}, \\
&(n+1)l_{i+1}+(i+3)\pmod{(n+1)}, \\
& \qquad \qquad \vdots\\
&(n+1)l_{i+1}+(i+n)\pmod{(n+1)} )
\end{align*}

Then, we construct a star $L_{t-1}$ as follow:
\begin{align*}
    L_{t-1} &= ((n+1)c+(t-1) \textrm{ ; } \\
&(n+1)l_1 + 0,
(n+1)l_2 + 1,
(n+1)l_3 + 2,
\dots,
(n+1)l_{t-1} + (t-2),\\
&(n+1)c+(t+0),
(n+1)c+(t+1),
(n+1)c+(t+2),
\dots,
(n+1)c+(n) )
\end{align*}

By constructing these stars, all vertices in the vertex set $\{c(n+1)+i, \l_1(n+1)+i, l_2(n+1)+i, \dots, l_{t-1}(n+1)+i |  i \in \{0,1,\dots,t-1\} \},$ $\forall L \in F$ have been used on one $L_i$.\\

By constructing $n$-stars via the method above, we successfully constructed $n$-star factor on $V = \{0,1,\dots,g(n+1)-1\}$, $B$. We obtain a total of $g$ $n$-star factors by taking $B+(n+1)j$ for $j = 0,1,\dots,g-1$.\\


\end{proof}


\section{Balanced Star Arrays and Part II factors}

In the graph $K_v$, for each difference $d$, there are $v$ edges with that difference. So when decomposing $K_v-I$ into $n$-star factors, we must ensure that for any difference $d$, each edge with difference $d$ appears exactly once in a star. To keep track of which differences were used in the Part I factors, and which differences we still need to cover to complete the decomposition, we will use an array with special properties.\\

Let $V$ be a set of $v$ vertices, $D= \{0,1,\dots,\frac{v-2}{2}$, and $D' = D\backslash \{d \in D| d\equiv 0 \pmod{n+1} \}$. A {\em balanced star array} for $V$ is a $\lceil \frac{v-2}{2(n+1)}\rceil \times n$ array $T=T^{1} \cup T^{2}$ whose entries partition the set $D'$, and satisfy the following properties:

\begin{itemize}
\item The columns are indexed by $j= 1, 2, \ldots, n$, and all entries in the $j^{th}$ column are congruent to $j \pmod{(n+1)}$.
\item $T^{1}$ is a subarray of $T$ whose entries represent the differences covered by the stars in the Part I factors, and one row of $T^1$ contains $q$ filled and $n-q$ empty cells where $r$ is the remainder when $\frac{v-2}{2}$ is divided by $(n+1)$.
\item $T^{2}$ is a subarray of $T$ with no empty cells.
\end{itemize}

    Each entry $d$ in $T$ represents all edges $\{u,v\}$, with difference $d$ such that $u<v$ and $u \in V$. The entries in $T^1$ are differences from $D'$ that have been covered in the Part I factors, and the entries in $T^2$ are differences that have not yet been covered.
    
    Note that none of the differences in $D'$ are congruent to $0 \pmod{n+1}$. This is because, by Lemma~\ref{Part I}, developing the base block $B$ guarantees that every vertex $u$ in $V$ is incident to all edges with difference $d \equiv 0 \pmod{n+1}$. Thus, we are only concerned with the differences that are covered by prime edges. We will build the arrays so that each full row of $T^1$ corresponds to the set of $n$ differences covered by a particular Part I prime star. If $\frac{v-2}{2}$ is not divisible by $n+1$, then one row of $T^1$ will contain exactly $r$ cells where $r$ is the remainder when $\frac{v-2}{2}$ is divided  by $n+1$. Note that the non-empty cells in this row corresponds to the prime edges in the mixed star from Part I. \\

\begin{lemma} 
\label{Part II}
(Part II factors) If there exists a balanced star array for each set $V_{i}, i \in \mathbb{Z}_{n+1}$, then there is a decomposition of $K_{v}-I$ into $n$-star factors.
\end{lemma}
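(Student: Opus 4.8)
The plan is to keep the $g$ factors produced by Lemma~\ref{Part I} and to build, from each full row of each subarray $T^2$, one further $n$-star factor; these ``Part II factors'' will cover exactly the edges recorded by the $T^2$'s, so that together with the Part I factors they decompose $K_v-I$. Throughout I take $I$ to be the $1$-factor consisting of the edges of difference $\frac v2$ (a perfect matching of $K_v$, since $v$ is even), and I index the edges of a given difference $d\neq\frac v2$ by their ``source'': $e^{(d)}_x:=\{x,\,x+d\bmod v\}$ for $x\in\ZZ_v$ (these are $v$ distinct edges), so that the entry $d$ in the balanced star array for $V_i$ stands for the block $E^{(d)}_i:=\{e^{(d)}_x:x\in V_i\}$ of $g$ edges. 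By Lemma~\ref{Part I} and the meaning of $T^1$, the Part I factors cover every edge whose difference is $\equiv 0\pmod{n+1}$ exactly once (these come from developing the pure stars of the base block) together with, for each $i$, exactly the blocks $E^{(d)}_i$ with $d$ an entry of $T^1$ in the array for $V_i$. Since $T^1\cup T^2$ partitions $D'$ for each $i$, the edges of $K_v-I$ still to be covered are precisely the blocks $E^{(d)}_i$ with $d$ an entry of $T^2$.

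The core step is the following. For each $i\in\ZZ_{n+1}$ and each full row $(d_1,\dots,d_n)$ of $T^2$ in the array for $V_i$ (so $d_j$ is the column-$j$ entry, whence $d_j\equiv j\pmod{n+1}$ and $1\le d_j\le\frac{v-2}{2}$), the set, with all arithmetic taken modulo $v$,
\[
  \cS \;=\; \bigl\{\,(c;\ c+d_1,\ c+d_2,\ \dots,\ c+d_n)\ :\ c\in V_i\,\bigr\}
\]
is an $n$-star factor of $K_v-I$. Each member of $\cS$ is a genuine $n$-star, because $d_1,\dots,d_n$ are distinct and none is $\equiv 0\pmod{n+1}$, so the centre and the $n$ leaves are distinct vertices. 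The stars of $\cS$ partition $V(K_v)$: a vertex $x$ with $x\equiv i\pmod{n+1}$ is the centre of exactly one star, $S_x$, and is a leaf of none; a vertex $x$ with $x\equiv i+j\pmod{n+1}$ (there is a unique such $j\in\{1,\dots,n\}$) is the $j$-th leaf of exactly one star, $S_{x-d_j}$, whose centre $x-d_j\equiv i$ lies in $V_i$, and lies on no other. Since every $d_j\le\frac{v-2}{2}<\frac v2$, no edge of $\cS$ lies in $I$, so $\cS$ is a factor of $K_v-I$; moreover the edge set of $\cS$ is $\bigcup_{j=1}^n\{e^{(d_j)}_c:c\in V_i\}=\bigcup_{j=1}^n E^{(d_j)}_i$, exactly the union of the blocks named by this row of $T^2$.

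It remains to assemble. Letting $(i,\text{row})$ range over all residues and all full rows of the $T^2$'s, the resulting Part II factors have pairwise disjoint edge sets: two such factors either use disjoint sets of differences, or use a common difference $d$ but with sources in distinct residue classes (distinct $i$) and hence disjoint blocks $E^{(d)}_i$, or coincide (because the entries of each array partition $D'$, so a difference occurs at most once in a given $T^2$). Their union is exactly the family of blocks $E^{(d)}_i$ with $d$ an entry of $T^2$; combined with the Part I factors this is every block $E^{(d)}_i$ for $d\in D'$ together with every edge of difference $\equiv 0\pmod{n+1}$, that is, all of $K_v-I$, each edge exactly once. Hence the Part I factors together with the Part II factors form a decomposition of $K_v-I$ into $n$-star factors, as required; counting, this uses $g$ Part I factors and $(n+1)$ times the number of full rows of the $T^2$'s many Part II factors, which one checks equals $s$.

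The main obstacle is the bookkeeping behind the phrase ``the Part I factors cover exactly the blocks named by $T^1$'': one must verify that blowing up and developing the prime stars, the mixed star and the little star of the base block really produces whole blocks $E^{(d)}_i$ of the above shape --- this is what forces the column condition ($d$ in column $j$ means $d\equiv j\pmod{n+1}$) and the structure of $T^1$ (one full row per Part I prime star, plus a short row of $q$ cells coming from the prime edges of the mixed star in the $k'$ odd case). One must also handle the forward/wrap-around distinction carefully: the edge $e^{(d)}_c=\{c,\,c+d\bmod v\}$ wraps past $v$ for some $c\in V_i$, and one has to check that indexing each block by its source $c$ (rather than by the smaller endpoint) is exactly the convention under which both the Part I edges and the Part II stars split into complete blocks. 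Carrying this through in the two cases $k'$ even and $k'$ odd --- in the latter keeping the exceptional difference $\frac v2$ (which is $\equiv 0\pmod{n+1}$ when $k'$ is even but $\equiv\frac{n+1}{2}\pmod{n+1}$ when $k'$ is odd, and in neither case lies in $D$) correctly separated from the differences $T$ must account for --- is where essentially all of the work lies.
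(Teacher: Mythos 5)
Your proposal is correct and follows essentially the same route as the paper: one $n$-star factor per full row of each $T_i^2$, obtained by developing the base star $(i;i+d_1,\dots,i+d_n)$ over the coset $V_i$, with the column condition $d_j\equiv j\pmod{n+1}$ guaranteeing that the translates are vertex-disjoint and spanning, and the partition property of the array guaranteeing that these factors together with the Part I factors cover every edge of $K_v-I$ exactly once. Your version merely spells out the disjointness/spanning check and the source-indexing convention in more detail than the paper does.
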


\begin{proof}
Let the $1$-factor $I$ be given by:
\begin{align*}
    I = \{ \{ i,i+\frac{v}{2} \} : i \in \{0,1,2,\dots, \frac{v}{2}-1 \} \}.
\end{align*}

Every edge with difference $d \equiv 0 \pmod{n+1}$ is contained in exactly one pure or mixed star from Part I. Therefore, we need only be concerned with ensuring that each edge with difference $d \not \equiv 0 \pmod{n+1}$ is contained in exactly one $n$-star. Let $V = \bigcup_{i=0}^n V_i$, and let $T_{i}$ be the balanced star array for the set $V_{i}$. The differences in $T_{i}^{1}$ are covered by the factors given in Part I. 

For each row of the subarray $T_{i}^{2}$, we construct an $n$-star factor as follows. Let the entries in the given row be $(d_1, d_2, d_3, \dots, d_n)$. Define the base star to be $s=(i;i+d_1, i+d_2, i+d_3, \dots, i+d_n)$. We obtain $\frac{v-(n+1)}{n+1}$ more stars by taking $s+(n+1)j$ for $j=1,2, \ldots, \frac{v-(n+1)}{n+1}$. Because each $d_k \equiv k \pmod{n+1}$, for $k=1,2,3,\dots,n$,  we are guaranteed that these stars are disjoint and will span the set $V$. Furthermore, each forward edge of difference $d_k$ on the vertices of $V_{i}$ has been covered exactly once by this $n$-star factor. Because the balanced star array for $V_{i}$ partitions $D'$, we have exhausted all of the edges $\{u,v\}$ with difference $d$ such that $u<v$ and $u \in V_i$. Because there is a balanced star array for each $V_{i}$, we have covered all edges of each difference. Thus we have decomposed $K_{v}-I$ into $n$-star factors.
\end{proof}

Next, we build the needed balanced star arrays, used to record the differences covered by the stars from the Part I factors given by Lemma~\ref{Part I}. Lemma~\ref{BSAo} deals with the case when $k'$ is odd, and Lemma~\ref{BSAe} is for when $k'$ is even.\\


\begin{lemma} 
\label{BSAo}
There is a balanced star array for each $V_{i}$, $i \in \mathbb{Z}_{n+1}$ when $v=2n(n+1)k+(n+1)(n+2)$ with $k \geq 0$.
\end{lemma}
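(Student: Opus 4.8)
The plan is to construct, for each residue class $i \in \ZZ_{n+1}$, a balanced star array $T_i = T_i^1 \cup T_i^2$ whose entries partition $D' = \{1,2,\dots,\tfrac{v-2}{2}\} \setminus \{d : d \equiv 0 \pmod{n+1}\}$, with the subarray $T_i^1$ recording exactly the prime differences that the Part~I factors of Lemma~\ref{Part I} already cover on the vertex set $V_i$. Since $v = 2n(n+1)k+(n+1)(n+2)$ gives $\tfrac{v-2}{2} = n(n+1)k + \tfrac{(n+1)(n+2)}{2} - 1$, the remainder $r$ of $\tfrac{v-2}{2}$ modulo $n+1$ works out so that $T^1$ has one short row with $q$ filled cells, matching the $q$ prime edges of the mixed star from the almost $n$-star factor (Lemmas~\ref{todd1}--\ref{todd3}); the other rows of $T^1$ are full and correspond to the prime stars $P_3$ (and, in Lemma~\ref{todd2}, possibly split prime contributions). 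The first step is therefore to read off from each of Lemmas~\ref{todd1}, \ref{todd2}, \ref{todd3} exactly which differences $d \in D'$ appear as prime differences in the constructed almost $n$-star factor, and to observe that by the "at most twice / at least once" property, each such $d$ is a difference that needs to be covered exactly once more — precisely the set of entries we must place into $T^1$, with the complement $D' \setminus (\text{prime differences})$ going into $T^2$.

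Next I would handle the column-congruence constraint: column $j$ must contain only differences $\equiv j \pmod{n+1}$. Because the Part~I prime-star construction in Lemma~\ref{Part I} rotates the leaf offsets of each prime star through all residues $\bmod\,(n+1)$ (the $(n+i),(n-1+i),\dots$ shifts), each Part~I prime star on $V_i$ in fact already "uses up" one forward edge of each of its $n$ differences on $V_i$ precisely once; so assigning the $n$ prime differences of a given Part~I prime star to a single row, sorted into columns by residue, is forced and consistent. The key verification is that within the almost $n$-star factor each prime star (and the prime part of the mixed star) contributes exactly one difference in each needed residue class, so that every row of $T^1$ is "rainbow" in the residues it occupies — this follows from the way the lengths in $P_3$, $B_1 \cup B_2$, and $L$ were chosen in Section~\ref{ASF} to be distinct and to exhaust a contiguous-ish block of lengths below $\mu$. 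Then $T^2$ is simply whatever differences of $D'$ remain; since $D'$ has size $nk' + \tfrac{n(n+2)}{2} $-ish and $T^1$ accounts for the prime differences, one checks $|T^2|$ is a multiple of $n$ and that $T^2$'s entries, sorted by residue into $n$ columns, fill a rectangular subarray with no empty cells. I would organize this by the same case split as Section~\ref{ASF} ($k=0$; $1\le k\le q$; $k\ge q+1$), since the prime-difference multiset differs slightly among the three lemmas.

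The main obstacle I anticipate is the bookkeeping in the middle case (Lemma~\ref{todd2}), where the mixed star's prime edges split into $w-1$ backward and $q-(w-1)$ forward prime edges and the prime stars $P_3$ together with the little star $L$ cover the remaining lengths: I must check that, residue class by residue class modulo $n+1$, the total count of prime differences of each residue is exactly right so that the columns of $T^1$ have the claimed shape (full rows plus one $q$-cell row), and that no residue is over- or under-represented. This is a finite but fussy congruence count; I would reduce it to verifying (i) $|D'| \equiv \tfrac{v-2}{2} - k'$ is split correctly between $T^1$ and $T^2$, (ii) each of the $n$ residues $1,\dots,n$ occurs the same number of times in $D'$ up to the short row, and (iii) the prime differences actually realized in Part~I, grouped by their originating star, are rainbow in residues. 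Once these three counts check out, placing entries into the array is mechanical and the balanced-star-array axioms hold by construction, completing the proof.
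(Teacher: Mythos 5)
Your overall strategy coincides with the paper's: record the prime differences used by the Part~I factors as the rows of $T_i^1$ (one full row per blown-up prime star or little-star factor, one short row of $q$ cells for the mixed star), and let $T_i^2$ be the remaining differences of $D'$. However, there is a genuine gap in where you locate the key verification. You assert that the ``rainbow'' property of each row --- one entry in each residue class $1,\dots,n \pmod{n+1}$ --- ``follows from the way the lengths in $P_3$, $B_1 \cup B_2$, and $L$ were chosen in Section~3 to be distinct.'' That is not where it comes from. A $K_v$-difference of a blown-up edge has the form $(n+1)(l-c)+(\text{offset})$, so its residue mod $n+1$ is determined entirely by the leaf offset assigned in the Lemma~\ref{Part I} blow-up, independently of the $K_g$-lengths; the distinctness of the Section~3 lengths only guarantees that the array entries are distinct elements of $D'$, not that each row meets every column. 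Consequently the actual content of the proof is the offset bookkeeping in Lemma~\ref{Part I}, and the case analysis must follow that lemma's case split on $t$ (the little-star order: $t=1$, $1<t<q+2$, $t=q+2$, $t>q+2$), not the Section~3 split on $k$ that you propose. In particular the last one or two blown-up little stars ($L_{t-2}$, $L_{t-1}$) have leaves distributed over several base vertices $l_1,\dots,l_{t-1}$ and over $c$ itself, with offsets $0,1,\dots,t-3$, then a jump skipping $t-2$ (respectively $q+1$), and one must check that the resulting residues $2-t,3-t,\dots,-1,1,2,\dots$ are exactly the nonzero classes with no repetition; this is the bulk of the paper's proof and cannot be waved off as mechanical.

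A second, smaller gap concerns the backward prime edges of the mixed star. Your plan is to ``count prime differences of each residue,'' but the essential observation is structural: a backward leaf $l_j'$ of $m_i$ contributes an edge whose smaller endpoint lies in $V_{(i-j)\bmod(n+1)}$ rather than $V_i$, so its difference is recorded in a \emph{different} table, yet (since $v\equiv 0 \pmod{n+1}$) still in column $j$. It is this cross-table redistribution that guarantees every $T_i^1$ receives exactly one short row with its $q$ entries in columns $1,\dots,q$ and empty cells in columns $q+1,\dots,n$. Without making that explicit, a per-table residue count does not close the argument.
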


\begin{proof}
If $v=2n(n+1)k+(n+1)(n+2)$, observe that $T$ has $\lceil \frac{v}{2(n+1)} \rceil$ rows, and one row contains $q$ entries because $\frac{v-2}{2} \equiv q \pmod{n+1}$. It also means that the first $q$ columns contain one more entry than other columns. For $T=T^1 \cup T^2$ to be a balanced star array, we must prove that the subarray $T^1$ contains a row with $q$ entries and the subarray $T^2$ contains no empty cells.\\

For each $i \in \{0,1,\dots,n\}$, let $T_i=T_i^1 \cup T_i^2$ be the array for $V_i$, recording the differences used in the Part I factors from Lemma~\ref{Part I}, when $k'$ is odd. We will prove that each $T_i$ is balanced. \\

Note that all itemized expressions below are$\pmod{n+1}$.\\

For any prime star $p_i$, the $n$ forward differences are as follows:
\begin{itemize}
    \item $(((n+1)l'_1 + n + i ) - ((n+1)c+i) ) \pmod{n+1)} \equiv n $
    \item $(((n+1)l'_2 + (n-1) + i ) - ((n+1)c+i) ) \pmod{n+1)} \equiv n-1$
    \item $(((n+1)l'_3 + (n-2) + i ) - ((n+1)c+i) ) \pmod{n+1)} \equiv n-2$\\
     $\vdots$ 
    \item $(((n+1)l'_{n-1} + 2 + i ) - ((n+1)c+i) ) \pmod{n+1)} \equiv 2$
    \item $(((n+1)l'_{n} + 1 + i ) - ((n+1)c+i) ) \pmod{n+1)} \equiv 1$
\end{itemize}

We can clearly see that each prime star $p_i$ covers $n$ differences, which are equivalent to $1,2,\dots,n \pmod{(n+1)}$. Thus, each prime star's differences can be recorded as a row of $T_i^1$ with no empty cells.\\

When recording the differences covered by little stars, we consider cases, based on $t$, which gives the number of vertices in $L$.

Case $t = 1$: When $L = \{ c \}$, we have the following $n$ differences.
\begin{itemize}
    \item $( ((n+1)c+1) - ((n+1)c ) )   \pmod{(n+1)} \equiv 1$
    \item $( ((n+1)c+2) - ((n+1)c ) )   \pmod{(n+1)} \equiv 2$
    \item $( ((n+1)c+3) - ((n+1)c ) )   \pmod{(n+1)} \equiv 3$\\
     $\vdots$
    \item $( ((n+1)c+n) - ((n+1)c ) )   \pmod{(n+1)} \equiv n$    
\end{itemize}

Thus, the differences covered by this star can be recorded as a row of $T_i^1$ with no empty cells.

Case $1 < t < q+2$: For $i=0,1,\dots,t-3$, the $n$ forward prime differences in $L_i$ are:

\begin{itemize}
    \item $( ( (n+1)l_{i+1}+(i+1)) - ( (n+1)c+i ) )  \pmod{(n+1)} \equiv 1$
    \item $( ( (n+1)l_{i+1}+(i+2)) - ( (n+1)c+i ) )  \pmod{(n+1)} \equiv 2$
    \item $( ( (n+1)l_{i+1}+(i+3)) - ( (n+1)c+i ) )  \pmod{(n+1)} \equiv 3$\\
     $\vdots$
    \item $( ( (n+1)l_{i+1}+(i+n)) - ( (n+1)c+i ) )  \pmod{(n+1)} \equiv n$
\end{itemize}

So the differences covered by $L_i$ can be recorded as a row of $T_i^1$ with no empty cells.

The differences covered by $L_{t-2}$ are:

\begin{itemize}
    \item $( ( (n+1)l_1 + 0 ) - ((n+1)c + (t-2) ) )  \pmod{(n+1)} \equiv 2-t$
    \item $( ( (n+1)l_2 + 1 ) - ((n+1)c + (t-2) ) )  \pmod{(n+1)} \equiv 3-t$\\
     $\vdots$
    \item $( ( (n+1)l_{t-2} + t-3 ) - ((n+1)c + (t-2) ) )  \pmod{(n+1)} \equiv -1$
    \item $( ( (n+1)c + (t-1) ) - ((n+1)c + (t-2) ) )  \pmod{(n+1)} \equiv 1$
    \item $( ( (n+1)c + (t-0) ) - ((n+1)c + (t-2) ) )  \pmod{(n+1)} \equiv 2$\\
     $\vdots$
    \item $( ( (n+1)c + (q) ) - ((n+1)c + (t-2) ) )  \pmod{(n+1)} \equiv q-t+2$
    \item $( ( (n+1)l_{t-1} + (q+1) ) - ((n+1)c + (t-2) ) )  \pmod{(n+1)} \equiv q-t+3$
    \item $( ( (n+1)c + (q+2) ) - ((n+1)c + (t-2) ) )  \pmod{(n+1)} \equiv q-t+4$
    \item $( ( (n+1)c + (q+3) ) - ((n+1)c + (t-2) ) )  \pmod{(n+1)} \equiv q-t+5$\\
     $\vdots$
    \item $( ( (n+1)c + (n) ) - ((n+1)c + (t-2) ) )  \pmod{(n+1)} \equiv n-t+2 \equiv 1-t$
\end{itemize}

The differences covered by $L_{t-1}$ are:

\begin{itemize}
    \item $( ( (n+1)l_{t-1} + (0) ) - ( (n+1)c + (t-1) ) )  \pmod{(n+1)} \equiv 1-t$
    \item $( ( (n+1)l_{t-1} + (1) ) - ( (n+1)c + (t-1) ) )  \pmod{(n+1)} \equiv 2-t$
    \item $( ( (n+1)l_{t-1} + (2) ) - ( (n+1)c + (t-1) ) )  \pmod{(n+1)} \equiv 3-t$\\
     $\vdots$
    \item $( ( (n+1)l_{t-1} + (q) ) - ( (n+1)c + (t-1) ) )  \pmod{(n+1)} \equiv q-t+1$
    \item $( ( (n+1)l_{t-1} + (q+2) ) - ( (n+1)c + (t-1) ) )  \pmod{(n+1)} \equiv q-t+3$
    \item $( ( (n+1)l_{t-1} + (q+3) ) - ( (n+1)c + (t-1) ) )  \pmod{(n+1)} \equiv q-t+4$\\
     $\vdots$
    \item $( ( (n+1)l_{t-1} + (n) ) - ( (n+1)c + (t-1) ) )  \pmod{(n+1)} \equiv n-t+1$
\end{itemize}

Thus, the differences covered by $L_{t-2}$ and $L_{t-1}$ can be recorded as rows of $T_{t-2}^1$ and $T_{t-1}^1$ respectively.\\

Case $t = q+2$: The $n$ forward prime differences in a $L_i$ are as below.

For $i=0, \dots, t-3$, the $n$ forward prime differences in $L_i$ are:
\begin{itemize}
    \item $( ( (n+1)l_{i+1}+(i+1)) - ( (n+1)c+i ) )  \pmod{(n+1)} \equiv 1$
    \item $( ( (n+1)l_{i+1}+(i+2)) - ( (n+1)c+i ) )  \pmod{(n+1)} \equiv 2$
    \item $( ( (n+1)l_{i+1}+(i+3)) - ( (n+1)c+i ) )  \pmod{(n+1)} \equiv 3$\\
     $\vdots$
    \item $( ( (n+1)l_{i+1}+(i+n)) - ( (n+1)c+i ) )  \pmod{(n+1)} \equiv n$
\end{itemize}

So the differences covered by $L_i$ can be recorded as a row of $T_i^1$ with no empty cells.\\

Then, the difference covered by $L_{t-2}$ are:
\begin{itemize}
    \item $( ( (n+1)l_1 + 0 ) - ((n+1)c + (t-2) ) )  \pmod{(n+1)} \equiv 2-t$
    \item $( ( (n+1)l_2 + 1 ) - ((n+1)c + (t-2) ) )  \pmod{(n+1)} \equiv 3-t$
    \item $( ( (n+1)l_2 + 2 ) - ((n+1)c + (t-2) ) )  \pmod{(n+1)} \equiv 4-t$\\
     $\vdots$
    \item $( ( (n+1)l_{t-2} + (t-3) ) - ((n+1)c + (t-2) ) )  \pmod{(n+1)} \equiv -1$
    \item $( ( (n+1)l_{t-1} + (t-1) ) - ((n+1)c + (t-2) ) )  \pmod{(n+1)} \equiv 1$
    \item $( ( (n+1)c+(t) ) - ((n+1)c + (t-2) ) )  \pmod{(n+1)} \equiv 2$
    \item $( ( (n+1)c+(t+1) ) - ((n+1)c + (t-2) ) )  \pmod{(n+1)} \equiv 3$
    \item $( ( (n+1)c+(t+2) ) - ((n+1)c + (t-2) ) )  \pmod{(n+1)} \equiv 4$\\
     $\vdots$
    \item $( ( (n+1)c + (n) ) - ((n+1)c + (t-2) ) )  \pmod{(n+1)} \equiv n-t+2 \equiv 1-t$
\end{itemize}

So the differences covered by $L_i$ can be recorded as a row of $T_{t-2}^1$ with no empty cells.\\

Then, the difference covered by $L_{t-1}$ are:
\begin{itemize}
    \item $( ( (n+1)l_{t-1} + (0) ) - ( (n+1)c + (t-1) ) )  \pmod{(n+1)} \equiv 1-t$
    \item $( ( (n+1)l_{t-1} + (1) ) - ( (n+1)c + (t-1) ) )  \pmod{(n+1)} \equiv 2-t$
    \item $( ( (n+1)l_{t-1} + (2) ) - ( (n+1)c + (t-1) ) )  \pmod{(n+1)} \equiv 3-t$\\
     $\vdots$
    \item $( ( (n+1)l_{t-1} + (q+0) ) - ( (n+1)c + (t-1) ) )  \pmod{(n+1)} \equiv q-t+1 = -1$
    \item $( ( (n+1)l_{t-1} + (q+2) ) - ( (n+1)c + (t-1) ) )  \pmod{(n+1)} \equiv q-t+3 = 1$\\
     $\vdots$
    \item $( ( (n+1)l_{t-1} + (n) ) - ( (n+1)c + (t-1) ) )  \pmod{(n+1)} \equiv n-t+1 = -t$
\end{itemize}

So the differences covered by $L_i$ can be recorded as a row of $T_{t-1}^1$ with no empty cells.\\

Case $t > q+2$, for $i=0,1,\dots,t-2$, the $n$ forward prime differences in $L_i$ are:

\begin{itemize}
    \item $( ( (n+1)l_{i+1}+(i+1)) - ( (n+1)c+i ) )  \pmod{(n+1)} \equiv 1$
    \item $( ( (n+1)l_{i+1}+(i+2)) - ( (n+1)c+i ) )  \pmod{(n+1)} \equiv 2$
    \item $( ( (n+1)l_{i+1}+(i+3)) - ( (n+1)c+i ) )  \pmod{(n+1)} \equiv 3$\\
     $\vdots$
    \item $( ( (n+1)l_{i+1}+(i+n)) - ( (n+1)c+i ) )  \pmod{(n+1)} \equiv n$
\end{itemize}

So the differences covered by $L_i$ can be recorded as a row of $T_i^1$ with no empty cells.

Then, the difference covered by $L_{t-1}$ are:

\begin{itemize}
    \item $( ( (n+1)l_1 + 0 ) - ( (n+1)c+(t-1) ) )  \pmod{(n+1)} \equiv 1-t$
    \item $( ( (n+1)l_2 + 1 ) - ( (n+1)c+(t-1) ) )  \pmod{(n+1)} \equiv 2-t$
    \item $( ( (n+1)l_3 + 2 ) - ( (n+1)c+(t-1) ) )  \pmod{(n+1)} \equiv 3-t$\\
     $\vdots$
    \item $( ( (n+1)l_{t-1} + (t-2) ) - ( (n+1)c+(t-1) ) )  \pmod{(n+1)} \equiv -1$
    \item $( ( (n+1)c+(t+0) ) - ( (n+1)c+(t-1) ) )  \pmod{(n+1)} \equiv 1$
    \item $( ( (n+1)c+(t+1) ) - ( (n+1)c+(t-1) ) )  \pmod{(n+1)} \equiv 2$
    \item $( ( (n+1)c+(t+2) ) - ( (n+1)c+(t-1) ) )  \pmod{(n+1)} \equiv 3$\\
     $\vdots$
    \item $( ( (n+1)c+(n) ) - ( (n+1)c+(t-1) ) )  \pmod{(n+1)} \equiv n-t+1 = 0-t$
\end{itemize}

So the differences covered by $L_i$ can be recorded as a row of $T_{t-1}^1$ with no empty cells.\\

Recall, there are the total $q$ forward or backward prime differences contained in each mixed star, and we introduced symbol $\oplus$ to build mixed stars in Section~\ref{Part I}. For a mixed star $m_i$ from Part I, the difference of a prime edge $l_j'$ is $( ( (n+1)l_{j}'+((i \oplus 1) ) - ( (n+1)c+i ) )  \pmod{(n+1)}$ for some $j \in 1,2,\dots,q$.

Now suppose $m_i$ contains backward leaves. Consider any backward leaf $l_j'$. If $l_j'$ had been a forward prime, it would have had the following difference:
\[ (n+1)l_j' + (i+j) - ((n+1)c+i) \equiv j \pmod{n+1}.  \]
Therefore, it would have been recorded in column $j$ of table $T_i^1$. Instead, because it is a backward leaf, it gets recorded in table $T_{(i-j)\pmod{n+1}}^1$. However, its difference is:
\begin{align*}
    v - ((n+1)l_j' + (i-j) - ((n+1)c+i)) &\equiv v+j \pmod{n+1} \\
    &\equiv j \pmod{n+1}.
\end{align*}
Thus the difference for $l_j'$ still gets recorded in column $j$. Therefore, every table $T_i^1$ contains a row corresponding to the mixed star $m_i$ with $q$ entries given to the $q$ prime differences and $n-q$ empty cells. Furthermore, the nonempty cells occur in columns $1,2,\dots,q$ and the $n-q$ empty cells occur in column $q+1,q+2,\dots,n$. Hence there is a balanced star array for each $V_i$.

An example of the six balanced star arrays for $V_0,V_1,\dots,V_5$ on $v=162$ vertices is given in Figure~\ref{BSA v162}. These arrays were built to illustrate the differences used in the stars from the Part I factors that are described in Figure~\ref{v162}.

\end{proof}

\begin{figure}[!htb]
    \footnotesize
    \begin{minipage}{.5\linewidth}
        \centering  
        \begin{tabular}{|c|ccccc|}
        \hline
         &  &  & $T_0$ &  & \\
         \hline
        $T_0^1$ & 37 & 32 & 21 & 16 & 11 \\
         & 49 & 50 & 51 & 52 & 53\\
         & 67 & 80 & * & * & * \\
         \hline
        $T_0^2$ & 1 & 2 & 3 & 4 & 5 \\
         & 7 & 8 & 9 & 10 & 17 \\
         & 13 & 14 & 15 & 22 & 23\\
         & 19 & 20 & 27 & 28 & 29 \\
         & 25 & 26 & 33 & 34 & 35  \\
         & 31 & 38 & 39 & 40 & 41 \\
         & 43 & 44 & 45 & 46 & 47 \\
         & 55 & 56 & 57 & 58 & 59 \\
         & 61 & 62 & 63 & 64 & 65 \\
         & 73 & 68 & 69 & 70 & 71 \\
         & 79 & 74 & 75 & 76 & 77 \\
         \hline
       \end{tabular}
    \end{minipage}
    \begin{minipage}{.5\linewidth}
    \begin{tabular}{|c|ccccc|}
        \hline
         &  &  & $T_1$ &  & \\
         \hline
        $T_1^1$ & 37 & 32 & 21 & 16 & 5 \\
         & 1 & 56 & 3 & 4 & 47 \\
         & 67 & 80 & * & * & * \\
         \hline
        $T_1^2$ & 7 & 2 & 9 & 10 & 11 \\
         & 13 & 8 & 15 & 22 & 17\\
         & 19 & 14 & 27 & 28 & 23 \\
         & 25 & 20 & 33 & 34 & 29  \\
         & 31 & 26 & 39 & 40 & 35 \\
         & 43 & 38 & 45 & 46 & 41 \\ 
         & 49 & 44 & 51 & 52 & 53 \\
         & 55 & 50 & 57 & 58 & 59 \\
         & 61 & 62 & 63 & 64 & 65 \\
         & 73 & 68 & 69 & 70 & 71 \\
         & 79 & 74 & 75 & 76 & 77 \\
        \hline
    \end{tabular}
    \end{minipage}
\vskip 0.2in

    \begin{minipage}{.5\linewidth}
        \centering  
        \begin{tabular}{|c|ccccc|}
        \hline
         &  &  & $T_2$ &  & \\
         \hline
        $T_2^1$ & 37 & 32 & 21 & 10 & 5 \\
         & 67 & 80 & * & * & * \\
         \hline
        $T_2^2$ & 1 & 2 & 3 & 4 & 11 \\
         & 7 & 8 & 9 & 16 & 17 \\
         & 13 & 14 & 15 & 22 & 23\\
         & 19 & 20 & 27 & 28 & 29 \\
         & 25 & 26 & 33 & 34 & 35  \\
         & 31 & 38 & 39 & 40 & 41 \\
         & 43 & 44 & 45 & 46 & 47 \\ 
         & 49 & 50 & 51 & 52 & 53 \\
         & 55 & 56 & 57 & 58 & 59 \\
         & 61 & 62 & 63 & 64 & 65 \\
         & 73 & 68 & 69 & 70 & 71 \\
         & 79 & 74 & 75 & 76 & 77 \\
         \hline
       \end{tabular}
    \end{minipage}
    \begin{minipage}{.5\linewidth}
    \begin{tabular}{|c|ccccc|}
        \hline
         &  &  & $T_3$ &  & \\
         \hline
        $T_3^1$ & 37 & 32 & 15 & 10 & 5 \\
         & 67 & 80 & * & * & * \\
         \hline
        $T_3^2$ & 1 & 2 & 3 & 4 & 11 \\
         & 7 & 8 & 9 & 16 & 17 \\
         & 13 & 14 & 21 & 22 & 23\\
         & 19 & 20 & 27 & 28 & 29 \\
         & 25 & 26 & 33 & 34 & 35  \\
         & 31 & 38 & 39 & 40 & 41 \\
         & 43 & 44 & 45 & 46 & 47 \\ 
         & 49 & 50 & 51 & 52 & 53 \\
         & 55 & 56 & 57 & 58 & 59 \\
         & 61 & 62 & 63 & 64 & 65 \\
         & 73 & 68 & 69 & 70 & 71 \\
         & 79 & 74 & 75 & 76 & 77 \\
        \hline
    \end{tabular}
    \end{minipage}
\vskip 0.2in

    \begin{minipage}{.5\linewidth}
        \centering  
        \begin{tabular}{|c|ccccc|}
        \hline
         &  &  & $T_4$ &  & \\
         \hline
        $T_4^1$ & 37 & 26 & 15 & 10 & 5 \\
         & 55 & 56 & 51 & 52 & 53\\
         & 67 & 74 & * & * & * \\
         \hline
        $T_4^2$ & 1 & 2 & 3 & 4 & 11 \\
         & 7 & 8 & 9 & 16 & 17 \\
         & 13 & 14 & 21 & 22 & 23\\
         & 19 & 20 & 27 & 28 & 29 \\
         & 25 & 32 & 33 & 34 & 35  \\
         & 31 & 38 & 39 & 40 & 41 \\
         & 43 & 44 & 45 & 46 & 47 \\ 
         & 49 & 50 & 57 & 58 & 59 \\
         & 61 & 62 & 63 & 64 & 65 \\
         & 73 & 68 & 69 & 70 & 71 \\
         & 79 & 80 & 75 & 76 & 77 \\
         \hline
       \end{tabular}
    \end{minipage}
    \begin{minipage}{.5\linewidth}
    \begin{tabular}{|c|ccccc|}
        \hline
         &  &  & $T_5$ &  & \\
         \hline
        $T_5^1$ & 31 & 26 & 15 & 10 & 5 \\
         & 61 & 74 & * & * & * \\
         \hline
        $T_5^2$ & 1 & 2 & 3 & 4 & 11 \\
         & 7 & 8 & 9 & 16 & 17 \\
         & 13 & 14 & 21 & 22 & 23\\
         & 19 & 20 & 27 & 28 & 29 \\
         & 25 & 32 & 33 & 34 & 35  \\
         & 37 & 38 & 39 & 40 & 41 \\
         & 43 & 44 & 45 & 46 & 47 \\ 
         & 49 & 50 & 51 & 52 & 53 \\
         & 55 & 56 & 57 & 58 & 59 \\
         & 67 & 62 & 63 & 64 & 65 \\
         & 73 & 68 & 69 & 70 & 71 \\
         & 79 & 80 & 75 & 76 & 77 \\
        \hline
    \end{tabular}
    \end{minipage}

    \caption{Balanced star array for $v=162$}
    \label{BSA v162}
\end{figure}

\begin{lemma} 
\label{BSAe}
There is a balanced star array for each $V_{i}$, $i \in \mathbb{Z}_{n+1}$ when $v=2n(n+1)k+2(n+1)$ with $k \geq 1$.
\end{lemma}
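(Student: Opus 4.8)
The plan is to mirror the proof of Lemma~\ref{BSAo}, taking advantage of the fact that the even case is structurally simpler: the almost $n$-star factors produced in Lemmas~\ref{teven1}--\ref{teven2} contain only pure stars, prime stars and (possibly) a little star -- there is no mixed star -- so every row of $T^1$ will turn out to be full. I would begin by recording the arithmetic of the array. Writing $v=2n(n+1)k+2(n+1)$ gives $\frac{v-2}{2}=n\big((n+1)k+1\big)$, hence $\lceil\frac{v-2}{2(n+1)}\rceil=nk+1$, and for each residue $j\in\{1,\dots,n\}$ the set $D'$ contains exactly $nk+1$ elements congruent to $j\pmod{n+1}$; therefore $|D'|=n(nk+1)$ equals the number of cells of the $(nk+1)\times n$ array $T$, so $T$ must be completely filled. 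In particular the remainder $r$ of $\frac{v-2}{2}$ modulo $n+1$ is $n$, so the clause in the definition of a balanced star array concerning a row with $r$ filled and $n-r$ empty cells just asserts that this row is full, and no empty cells appear anywhere. For each $i\in\mathbb{Z}_{n+1}$ I would then build $T_i=T_i^1\cup T_i^2$, where $T_i^1$ records the differences covered by the Part~I stars whose centre lies in $V_i$, namely the developed prime stars and developed little stars produced in Case~2 of the proof of Lemma~\ref{Part I}.

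Next I would verify that every such developed star contributes one \emph{full} row of $T_i^1$. For a developed prime star $p_i$ the $n$ forward differences reduce, modulo $n+1$, to $n,n-1,\dots,2,1$ -- the computation is word for word the one in the proof of Lemma~\ref{BSAo} -- so $p_i$ fills a row with entries in columns $1,\dots,n$. For the developed little stars $L_0,\dots,L_{t-1}$ (vacuous if $t=0$, which covers $k=1$ since then $g\equiv 0\pmod{n+1}$), the Case~2 formula for $L_i$ with $i\le t-2$ again gives differences $\equiv 1,2,\dots,n\pmod{n+1}$; for $L_{t-1}$ one splits its leaves into the $t-1$ leaves of the form $(n+1)l_m+(m-1)$, contributing the residues $1-t,2-t,\dots,-1$, and the $n-t+1$ leaves of the form $(n+1)c+(t+s)$, contributing the residues $1,2,\dots,n-t+1$; modulo $n+1$ the union of these two blocks is exactly $\{1,2,\dots,n\}$, so $L_{t-1}$ also fills a row. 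Consequently each developed prime or little star with centre in $V_i$ puts exactly one entry into each of the $n$ columns of $T_i^1$; since column $j$ of $T_i$ must hold precisely the $nk+1$ elements of $D'$ congruent to $j\pmod{n+1}$, the complement $T_i^2$ is then automatically a full rectangular array with no empty cells, and $T_i^1\cup T_i^2$ partitions $D'$, which is the balanced star array condition.

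The step I expect to be the real obstacle is checking that the entries placed in $T_i^1$ are pairwise distinct elements of $D'$, so that $T_i^1$ and $T_i^2$ genuinely partition $D'$ rather than merely having the correct column sizes, and that no column of any $T_i$ is overfilled. This is handled by tracing back through Lemma~\ref{Part I}: the almost $n$-star factor on $G$ of Lemmas~\ref{teven1}--\ref{teven2} is built so that all of its prime forward differences are distinct and avoid the $0\pmod{n+1}$ pure differences, the mod-$(n+1)$ development of Case~2 carries distinct base prime edges to $K_v$-edges of distinct differences within a fixed $V_i$ (none a multiple of $n+1$), and the $t$ developed little stars occupy distinct rows with centres in $V_0,\dots,V_{t-1}$ only; a short count of the total number of Part~I prime and little-star rows against $nk+1$ (using $t\le n<nk+1$) confirms no overflow. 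Once this bookkeeping is in place, a balanced star array exists for every $V_i$, $i\in\mathbb{Z}_{n+1}$, which is the assertion of the lemma.
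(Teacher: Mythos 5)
Your proposal is correct and follows essentially the same route as the paper's proof: compute the residues modulo $n+1$ of the forward differences contributed by each developed prime star and each developed little star (including the split of $L_{t-1}$ into the $t-1$ leaves giving residues $1-t,\dots,-1$ and the $n-t+1$ leaves giving residues $1,\dots,n-t+1$), conclude every row of $T_i^1$ is full, and hence that $T_i^2$ has no empty cells. Your additional bookkeeping (the $(nk+1)\times n$ count and the distinctness check traced back to Lemma~\ref{Part I}) only makes explicit what the paper leaves implicit.
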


\begin{proof}
  
First, we note that the case when $k=0$ is given by Lemma~\ref{result0}.\\

If $v=2n(n+1)k+2(n+1)$, observe that $T$ has $nk+1$ rows, and thus, there are no empty cells in any row. For $T=T^1 \cup T^2$ to be a balanced star array, we must prove that each subarray $T^1$ and $T^2$ contains no empty cells.\\

For each $i \in \{0,1,\dots,n\}$, let $T_i=T_i^1 \cup T_i^2$ be the array for $V_i$ recording the differences used in the Part I factors from Lemma~\ref{Part I}, when $k'$ is even. We will prove that each $T_i$ is balanced. \\

For any prime star, $p_i$, the $n$ forward prime differences are as follows:
\begin{itemize}
    \item $(((n+1)l_1 + n + i ) - ((n+1)c+i) ) \pmod{n+1)} \equiv n$
    \item $(((n+1)l_1 + (n-1) + i ) - ((n+1)c+i) ) \pmod{n+1)} \equiv n-1$
    \item $(((n+1)l_1 + (n-2) + i ) - ((n+1)c+i) ) \pmod{n+1)} \equiv n-2$
    \\ $\vdots$ 
    \item $(((n+1)l_1 + 2 + i ) - ((n+1)c+i) ) \pmod{n+1)} \equiv 2$
    \item $(((n+1)l_1 + 1 + i ) - ((n+1)c+i) ) \pmod{n+1)} \equiv 1$
\end{itemize}

Thus, each prime star's differences can be recorded as a row of $T^1_i$ with no empty cells.\\

Then, for any little star, $L_i$, the $n$ forward prime differences are as follows.

For $i=0, \dots, t-2$, the forward prime differences in $L_i$ are:
\begin{itemize}
    \item $( ( (n+1)l_{i+1}+(i+1)) - ( (n+1)c+i ) )  \pmod{(n+1)} \equiv 1$
    \item $( ( (n+1)l_{i+1}+(i+2)) - ( (n+1)c+i ) )  \pmod{(n+1)} \equiv 2$
    \item $( ( (n+1)l_{i+1}+(i+3)) - ( (n+1)c+i ) )  \pmod{(n+1)} \equiv 3$
    \\ $\vdots$
    \item $( ( (n+1)l_{i+1}+(i+n)) - ( (n+1)c+i ) )  \pmod{(n+1)} \equiv n$
\end{itemize}

Thus, each little star's differences can be recorded as a row of $T^1_i$ for $i=0, \dots, t-2$ with no empty cells.\\

Then, the forward prime differences in $L_{t-1}$ is:
\begin{itemize}
    \item $( ( (n+1)l_1 + 0 ) - ( (n+1)c+(t-1) ) )  \pmod{(n+1)} \equiv 1-t$
    \item $( ( (n+1)l_2 + 1 ) - ( (n+1)c+(t-1) ) )  \pmod{(n+1)} \equiv 2-t$
    \item $( ( (n+1)l_3 + 2 ) - ( (n+1)c+(t-1) ) )  \pmod{(n+1)} \equiv 3-t$
    \\ $\vdots$
    \item $( ( (n+1)l_{t-1} + (t-2) ) - ( (n+1)c+(t-1) ) )  \pmod{(n+1)} \equiv -1$
    \item $( ( (n+1)c+(t+0) ) - ( (n+1)c+(t-1) ) )  \pmod{(n+1)} \equiv 1$
    \item $( ( (n+1)c+(t+1) ) - ( (n+1)c+(t-1) ) )  \pmod{(n+1)} \equiv 2$
    \item $( ( (n+1)c+(t+2) ) - ( (n+1)c+(t-1) ) )  \pmod{(n+1)} \equiv 3$
    \\ $\vdots$
    \item $( ( (n+1)c+(n) ) - ( (n+1)c+(t-1) ) )  \pmod{(n+1)} \equiv n-t+1 = 0-t$
\end{itemize}

Thus, a little star $L_{t-1}$'s differences can be recorded as a row of $T^1_{t-1}$ with no empty cells.\\

Therefore, we've shown that the rows in each $T^1_i$ contain no empty cells. Hence, there is a balanced star array for each $V_i$.

\end{proof}


\section{Results}
We begin with the case of $k=0$, in Lemma~\ref{result0}. Then, we will provide the general case, $k\geq1$, in Lemma~\ref{result1}.

\begin{lemma}
\label{result0}
Let $v=2(n+1)$. A $(K_2,K_{1,n})$-$URD(v;1,s)$ exists for all odd $n\geq3$.
\end{lemma}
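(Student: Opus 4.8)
The plan is to bypass the almost $n$-star machinery of Section~\ref{ASF} entirely, since here $g=\frac{v}{n+1}=2$ is too small for those constructions to apply (there is no admissible little-star size $t$), and instead to give a direct difference-based decomposition on the cyclic group $\mathbb{Z}_v$ with $v=2n+2$. I would put $K_v$ on the vertex set $\mathbb{Z}_{2n+2}$ and take the $1$-factor to be the ``halving'' factor $I=\{\{i,i+n+1\}:0\le i\le n\}$, which is exactly the convention $I=\{\{i,i+\tfrac v2\}\}$ used in Lemma~\ref{Part II}. Removing $I$ leaves precisely the edges of differences $1,2,\dots,n$ (difference $n+1$ being the one deleted); each of these $n$ differences contributes $2n+2$ edges, for a total of $2n(n+1)=|E(K_v-I)|$.

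Next, for each $j\in\{0,1,\dots,n\}$ I would define $F_j=A_j\cup B_j$, where $A_j$ is the $n$-star with centre $j$ and leaves $j+1,j+2,\dots,j+n$, and $B_j$ is the $n$-star with centre $j+n+1$ and leaves $j+n+2,j+n+3,\dots,j+2n+1$, all labels read in $\mathbb{Z}_{2n+2}$. The $2n+2$ labels $j,j+1,\dots,j+2n+1$ are $2n+2$ consecutive residues, hence a permutation of $\mathbb{Z}_{2n+2}$, so $A_j$ and $B_j$ are vertex-disjoint and together span $V$; thus each $F_j$ is a genuine $K_{1,n}$-factor of $K_v$.

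Then I would verify that $I,F_0,F_1,\dots,F_n$ edge-partition $K_v$. Since $1\le d\le n<n+1<2n+2-d$, the edge $\{j,j+d\}$ of $A_j$ has difference exactly $d$ (no wrap-around), so $A_j$ realises each difference in $\{1,\dots,n\}$ exactly once, and likewise $B_j$ realises the edge $\{j+n+1,\,j+n+1+d\}$ of difference $d$ exactly once. As $j$ runs through $0,\dots,n$, the family $\{\{j,j+d\}\}_{j=0}^{n}$ covers the difference-$d$ edges based at $0,1,\dots,n$, while $\{\{j+n+1,j+n+1+d\}\}_{j=0}^{n}$ covers those based at $n+1,\dots,2n+1$; together this is every difference-$d$ edge of $\mathbb{Z}_{2n+2}$, each exactly once. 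Hence $\bigcup_{j=0}^n F_j=K_v-I$, so $K_v=I\cup\bigcup_{j=0}^n F_j$; since $I$ is a single $K_2$-factor and $F_0,\dots,F_n$ are $n+1$ mutually edge-disjoint $K_{1,n}$-factors, and $n+1=\frac{(v-2)(n+1)}{2n}=s$, this is the required $(K_2,K_{1,n})$-$URD(v;1,s)$.

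I do not foresee a real obstacle: the construction is completely explicit and uniform in odd $n\ge3$. The only point needing care is the difference bookkeeping, namely confirming that for $d\le n$ the edge $\{j,j+d\}$ really has difference $d$ rather than $2n+2-d$, and that translating the two halves $A_\bullet$ and $B_\bullet$ through $j=0,\dots,n$ tiles each difference class exactly once; a quick check with $n=3$, $v=8$ (where $F_0=(0;1,2,3)\cup(4;5,6,7)$, $F_1=(1;2,3,4)\cup(5;6,7,0)$, and so on) confirms the pattern.
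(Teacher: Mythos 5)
Your construction is exactly the paper's: the same vertex set $\mathbb{Z}_{2n+2}$, the same halving $1$-factor $I$, and the same factors $F_j$ consisting of the two consecutive-interval $n$-stars centred at $j$ and $j+n+1$ (the paper likewise treats $v=2(n+1)$ as a standalone base case outside the almost-star machinery). Your difference-class bookkeeping is somewhat more explicit than the paper's verification, but the argument is the same and is correct.
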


\begin{proof}
       Let $V=\{0,1,\dots,2n+1 \}$ be the vertex set. We partition the vertex set into $n+1$ parts by taking $V$ to be $V = \bigcup_{i=0}^n V_i$ where $V_i=\{\nu \in V | \nu\equiv i \pmod{(n+1)} \}$. Let $T_i = \{1,2,\dots,n+1\}$ be the set of forward differences of the edge $\{c,v\}$ such that $c \in V_i$ is the center of a star. First, we construct a $1$-factor $I$ as follows:
\begin{align*}
    I = \{(i,i+(n+1)) : i \in \{0,1,2,\dots, n \} \}.
\end{align*}
       Then, we construct $n+1$ $n$-star factors $F_i$ for all $i=0,1,\dots,n$ as follows: 
\begin{align*}
    F_i = \{ &(0+i; 1+i,2+i,\dots,n+i),\\
    &(n+1+i;n+2+i,n+3+i,\dots,2n+1+i) \}.
\end{align*}

     We must prove that these factors cover all differences in each $T_i$. Note that $I$ covers the difference $n+1$ for each $T_i$. Then for each $i$,  $F_i$ covers the differences $\{1,2,\dots, n\} \in T_i$. Hence, $I$ and $F_i$ cover all differences in $T_i$, and thus, a $(K_2,K_{1,n})$-$URD(v;1,s)$ exists.

\end{proof}

\begin{lemma}
\label{result1}
Let $v=n(n+1)k'+2(n+1)$ for any non-negative integer $k'$. There exists a $(K_2,K_{1.n})-URD(v;1,s)$ for all odd $n\geq3$.
\end{lemma}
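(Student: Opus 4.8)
The plan is to assemble the machinery from Sections 3--6 into a single existence proof by case analysis on the parity of $k'$, together with the base case handled separately in Lemma~\ref{result0}. The necessary direction ($v \equiv 2(n+1) \pmod{n(n+1)}$ forces the stated form of $v$) is already Lemma~\ref{ness}, so here I only need the sufficiency: given $v = n(n+1)k' + 2(n+1)$, produce a $(K_2,K_{1,n})$-$URD(v;1,s)$. Recall that such a decomposition is exactly a $1$-factor $I$ together with a decomposition of $K_v - I$ into $n$-star factors.

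First I would dispose of $k'=0$, which is precisely Lemma~\ref{result0}, so assume $k' \geq 1$. Split into the two cases according to whether $k'$ is odd or even. In the odd case, write $k' = 2k+1$ and $v = 2n(n+1)k + (n+1)(n+2)$, so $g = v/(n+1) = 2nk + (n+2)$; in the even case, write $k' = 2k$ and $v = 2n(n+1)k + 2(n+1)$, so $g = 2nk+2$. In either case, apply the appropriate construction lemma from Section~\ref{ASF} to obtain an almost $n$-star factor on $G = K_g$: Lemma~\ref{todd1} when $k=0$ (odd case), Lemma~\ref{todd2} when $1 \le k \le q$, Lemma~\ref{todd3} when $k \ge q+1$; and Lemma~\ref{teven1} when $k=1$, Lemma~\ref{teven2} when $k \ge 2$ (even case). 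Each of these guarantees an almost $n$-star factor with every difference in the relevant range appearing at least once and at most twice among the stars, together with control over the structure of the mixed star and little star.

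Next, feed this almost $n$-star factor into Lemma~\ref{Part I} to obtain $g$ $n$-star factors of $K_v$ (the ``Part I factors''), which cover all edges of difference $\equiv 0 \pmod{n+1}$ and some of the remaining edges, recorded in the subarrays $T_i^1$. Then invoke Lemma~\ref{BSAo} (odd $k'$) or Lemma~\ref{BSAe} (even $k'$) to confirm that the differences covered by the Part I factors, together with the yet-uncovered differences, organize into a balanced star array for each residue class $V_i$, $i \in \mathbb{Z}_{n+1}$. Finally, Lemma~\ref{Part II} converts these balanced star arrays into a decomposition of $K_v - I$ into $n$-star factors (the ``Part II factors''), with $I$ the explicit $1$-factor $\{\{i, i+v/2\}\}$. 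Counting the total number of $n$-star factors gives $s = g + (\text{number of rows of } T^2 \text{ summed over the } n+1 \text{ arrays})$, which one checks equals $\frac{(v-2)(n+1)}{2n} = \frac{((n+1)k+2)(n+1)}{2}$ as in Lemma~\ref{ness}.

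The main obstacle is really bookkeeping rather than a single hard idea: one must verify that the ranges of differences promised by the Section~\ref{ASF} lemmas exactly match what the balanced star arrays need, that the $k$-ranges $\{0\}$, $\{1,\dots,q\}$, $\{q+1,\dots\}$ (and $\{1\}$, $\{\ge 2\}$) genuinely exhaust all $k \ge 0$, and that no edge of $K_v - I$ is either missed or double-covered across the Part I and Part II factors. All of this has been set up in the preceding sections, so the proof of Lemma~\ref{result1} itself is a short assembly argument citing Lemmas~\ref{todd1}--\ref{teven2}, \ref{Part I}, \ref{Part II}, \ref{BSAo}, \ref{BSAe}, and \ref{result0}, with the case split on the parity of $k'$ and, within each parity, on the size of $k$ relative to $q$.
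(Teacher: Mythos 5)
Your proposal matches the paper's proof: both dispose of $k'=0$ via Lemma~\ref{result0}, then for $k'\ge 1$ obtain an almost $n$-star factor from Lemmas~\ref{todd1}--\ref{teven2}, lift it to the Part~I factors via Lemma~\ref{Part I}, and finish with the balanced star arrays of Lemmas~\ref{BSAo}/\ref{BSAe} fed into Lemma~\ref{Part II}. This is essentially the same assembly argument, with your version spelling out the case split and bookkeeping slightly more explicitly than the paper does.
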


\begin{proof}
When $k'=0$, we have $v=2(n+1)$, and the result follows Lemma~\ref{result0}.

If $k' \geq 1$, there exists an {\em almost n-star factor} on $K_{\frac{v}{(n+1)}}$ by Lemma~\ref{todd1} $\sim$ Lemma~\ref{teven2}. Then, there exists $v$ $n$-star factors on $v$ vertices by Lemma~\ref{Part I}. Let the one $1$-factor $I$ be $I= \{ (u,v) : D(u,v) = \frac{v}{2} \}$. By Lemma~\ref{BSAo} and Lemma~\ref{BSAe}, there exists a balanced star array for each $V_i$ with $i \in \mathbb{Z}_{(n+1)}$. Thus, by Lemma~\ref{Part II}, the remaining edges of $K_v$ can be decomposed into $n$-star factors. 

\end{proof}

Lemma~\ref{result0} and ~\ref{result1} show that if $v$ satisfies our necessary condition, then a $(K_2,K_{1,n})$-$URD(v;1,s)$ exists for any odd $n$. Thus, our main result is proven.

\begin{thm}
    Let $n>1$ be an odd integer. A $(K_2,K_{1,n})$-$URD(v;1,s)$ exists if and only if $v \equiv 2(n+1) \pmod{n(n+1)}$.

\end{thm}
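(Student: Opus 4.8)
The plan is to prove the two directions separately, drawing entirely on the machinery assembled in the preceding sections.

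For necessity I would simply invoke Lemma~\ref{ness}: if a $(K_2,K_{1,n})$-$URD(v;1,s)$ exists, then counting edges ($v/2$ edges removed by the single $1$-factor, $nv/(n+1)$ edges per $n$-star factor) together with the divisibility of $v$ by $2$ and by $n+1$ forces the congruences $v\equiv 0\pmod{n+1}$ and $v\equiv 2\pmod n$, and the Chinese Remainder Theorem gives $v\equiv 2(n+1)\pmod{n(n+1)}$. Nothing more is needed here.

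For sufficiency I would assume $v\equiv 2(n+1)\pmod{n(n+1)}$, write $v=n(n+1)k'+2(n+1)$ with $k'\ge 0$, and split on $k'$. If $k'=0$ then $v=2(n+1)$ and Lemma~\ref{result0} already exhibits the decomposition explicitly (one $1$-factor $I$ carrying the difference $n+1$, and $n+1$ $n$-star factors carrying the differences $1,\dots,n$). If $k'\ge 1$, I would build the decomposition of $K_v-I$ into $n$-star factors in two stages. Stage~I: passing to $g=v/(n+1)$ points, use the appropriate one of Lemmas~\ref{todd1}--\ref{teven2} (the case split is on the parity of $k'$, and when $k'$ is odd also on the position of $k$ relative to $q=(n-1)/2$) to construct an almost $n$-star factor on $K_g$, then apply Lemma~\ref{Part I} to lift it to $g$ genuine $n$-star factors of $K_v$. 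Stage~II: record the prime differences consumed in Stage~I in a balanced star array $T_i$ for each residue class $V_i$, which is exactly Lemma~\ref{BSAo} when $k'$ is odd and Lemma~\ref{BSAe} when $k'$ is even, and then apply Lemma~\ref{Part II}, which converts each row of each $T_i^2$ into one more $n$-star factor and thereby exhausts all remaining edges of $K_v-I$. Finally I would count the classes: exactly one is a $K_2$-factor and the rest are $K_{1,n}$-factors, with $s=\frac{(v-2)(n+1)}{2n}$ as in Lemma~\ref{ness}. This chain is precisely Lemma~\ref{result1}, so combining the two directions proves the theorem.

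The step I expect to be the real obstacle is not this final assembly but the constructions inside Lemmas~\ref{todd1}--\ref{teven2} and their compatibility with Lemmas~\ref{Part I}--\ref{BSAe}. One must produce an almost $n$-star factor whose edge differences cover every value in $\{1,\dots,\mu\}$ at least once and at most twice, with all the doubled differences isolated into a single mixed star carrying exactly $q+1$ pure and $q$ prime edges, and with a little star of the prescribed order $t-1$; this requires a delicate choice of star centers and leaves, handled by several overlapping sub-cases, plus a verification that no two developed leaf labels coincide and all stay in range for every admissible $k$. Then, in Stage~I, the prime edges of the mixed star must be developed with the $\oplus$-shift so that their $q$ recorded differences land in columns $1,\dots,q$ of the correct tables $T_i^1$ while the remaining $n-q$ cells of that row stay empty, and the little-star development (again with sub-cases on $t$) must fill whole rows; getting this bookkeeping to line up so that each $T_i=T_i^1\cup T_i^2$ is genuinely balanced is where essentially all of the difficulty lies. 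Once those lemmas are in hand, the Main Theorem follows in just a few lines.
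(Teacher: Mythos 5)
Your proposal is correct and follows essentially the same route as the paper: necessity via the counting argument of Lemma~\ref{ness}, and sufficiency by combining Lemma~\ref{result0} for $k'=0$ with the chain Lemmas~\ref{todd1}--\ref{teven2}, \ref{Part I}, \ref{BSAo}/\ref{BSAe}, and \ref{Part II} for $k'\geq 1$, exactly as in Lemma~\ref{result1}. Your assessment of where the real difficulty lies (the almost-$n$-star-factor constructions and the balanced-star-array bookkeeping) also matches the structure of the paper.
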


\end{document}